\newcommand{\R}{{\mathbb R}}
\newcommand{\Z}{{\mathbb Z}}
\newcommand{\C}{{\mathbb C}}
\newcommand{\A}{{\mathbf A}}
\newcommand{\G}{{\mathbf G}}
\newcommand{\bs}{{\mathbf s}}
\newcommand{\fS}{{\mathfrak S}}
\newcommand{\cE}{{\mathcal E}}
\newcommand{\cG}{{\mathcal G}}
\newcommand{\cH}{{\mathcal H}}
\newtheorem{theorem}{Theorem}
\newtheorem{lemma}{Lemma}
\newtheorem{proposition}[theorem]{Proposition}
\theoremstyle{definition}
\numberwithin{definition}{section}
\numberwithin{equation}{section}
\theoremstyle{remark}
\newtheorem*{remark}{Remark}
\newtheorem*{acknowledgement}{Acknowledgement}
\begin{document}

\title[Solvable base change]{Solvable base change}

\author{L. Clozel}

\address{Universit\'e de Paris Sud, Math\'{e}matiques, B\^{a}t. 307, F-91405 Orsay Cedex.}  \email{laurent.clozel@math.u-psud.fr}

\author{C.~S.~Rajan}

\address{Tata Institute of Fundamental  Research, Homi Bhabha Road,
Bombay - 400 005, INDIA.}  \email{rajan@math.tifr.res.in}

\subjclass{11F55; 11S37}

\begin{abstract} We determine the image and the fibres for solvable base change.

\end{abstract}

\maketitle

\section{Introduction}
The reciprocity conjectures formulated by Langlands give a
parametrization of cusp forms associated to $GL_n$ over a global field
$K$ by $n$-dimensional complex representations of the Langlands group attached
to $K$. The Langlands group, whose existence is yet to be shown, is a
vast generalization of the absolute Galois group or the Weil group of
$K$, and can be considered in analogy with these latter groups.   In
this analogy, the theory of
base change amounts to restriction of parameters on the Galois
theoretic side. 

For cyclic extensions of number fields of prime degree, 
the existence 
and the characterization of the image and fibres of base change for
$GL(2)$ was
done by Langlands (\cite{L2}) following earlier work of Saito and
Shintani. This was used by Langlands to establish Artin's conjecture
for a class of octahedral two dimensional representations of the
absolute Galois group of a number field. The work of Saito, Shintani
and Langlands was generalized by Arthur and Clozel to $GL(n)$, for
all $n$.  In \cite{AC}, they proved the existence and characterized
the image of the base change transfer for cyclic extensions of number
fields of prime degree. However, the proof in the general, cyclic case contained a mistake. 

The theorem of Langlands, Arthur and Clozel, 
gives inductively the existence of the base
change transfer corresponding to a solvable extension of number fields
for $GL_n$. The problem of characterizing the image and fibres of base
change for cyclic extensions of non-prime
degree was considered by Lapid and Rogawski in \cite{LR}. This led
them to conjecture the non-existence of certain types of cusp forms on
$GL(n)$, and they proved this conjecture when $n=2$. (The general cyclic case has since been settled by other means, see Labesse \cite{Lab}.)     It was shown in
\cite{R} that the conjecture of Lapid and Rogawski allows a
characterization of the image and fibres of the base change map for
solvable extensions of number fields. In this article, our main aim is
to prove the conjecture of Lapid and Rogawski for all $n$. 

In order to make this paper more self- contained, we have included here complete  proofs of the theorem of Lapid and Rogawski characterising  cusp forms on GL(n) whose Galois conjugate by a generator of a cyclic Galois group differs from the original form by an Abelian twist (Theorem 2), which was in their paper conditional on Theorem 1; and of the theorem of one of us (Rajan) characterising the image and fiber of base  change in a solvable extension (Theorem 3.)

\subsection{Main theorem}
For a number field $F$, let $\A_F$ denote the adele ring 
of $F$ and $C_F$ the group
of idele classes of $F$. Given a representation $\pi$ of $GL(n, \A_F)$
and $\sigma$ an automorphism of $F$, define $^{\sigma}\pi$ to be the
representation $g\mapsto \pi(\sigma^{-1}(g))$ for $g\in GL(n, \A_F)$.  
Given an extension $E/F$ of number fields, if $\omega$ is an idele
class character of $E$, denote by $\omega_F$ its restriction to the
idele class group $C_F$ of $F$. If $\omega$ is a character of $C_F$,
define $\omega^E=\omega\circ N_{E/F}$, where $N_{E/F}:C_E\to C_F$ is
the norm map on the idele classes of $E$. 

Let $E/F$ be a cyclic extension of number fields of degree $d$
and $\sigma$ denote a generator for ${\rm Gal}(E/F)$.  By 
$\varepsilon_{E/F}$, we mean 
an idele class character of $C_F$ corresponding to the
extension $E/F$, i.e., a character of $C_F$ of order $d$, 
vanishing on the subgroup of norms $N_{E/F}(C_E)$ coming from $E$. 

The primary aim of this paper is to establish the following conjecture
of Lapid and Rogawski (\cite{LR}) for $GL(n)$ for all $n$, proved by
them for $GL(2)$: 
\begin{theorem}[Statement A, p.178, \cite{LR}]\label{nonexistence}
Let $E/F$ be a cyclic extension of number fields of degree $d$
and $\sigma$ denote a generator for ${\rm Gal}(E/F)$.  Let $\omega$ be an
idele class character of $E$, such that its restriction to $C_F\subset
C_E$ is  $\varepsilon_{E/F}$.  Then there does not exist any
 cuspidal automorphic representation $\pi$ of
  $GL(n, \A_E)$ such that
\begin{equation}\label{lrconj}
^{\sigma}\Pi\simeq \Pi\otimes \omega.
\end{equation}
  \end{theorem}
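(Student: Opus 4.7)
The plan is to convert the hypothesis into a numerical divisibility via central characters, then set up a descent along a tower of intermediate fields using Arthur--Clozel base change and automorphic induction, and argue by induction on $d$.

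First I would extract the central-character identity: from ${}^\sigma\Pi \simeq \Pi\otimes\omega$ one obtains ${}^\sigma\omega_\Pi = \omega_\Pi\cdot\omega^n$ as characters of $C_E$. Restricting to $C_F$, on which $\sigma$ acts trivially, this collapses to $\varepsilon_{E/F}^n = 1$, so $d\mid n$. In particular the theorem is immediate whenever $\gcd(n,d)=1$, and henceforth I may assume $d\mid n$.

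Next I would iterate the hypothesis to obtain ${}^{\sigma^j}\Pi \simeq \Pi\otimes \omega_j$ with $\omega_j=\prod_{i=0}^{j-1}{}^{\sigma^i}\omega$, and note that $\omega_d$ is $\sigma$-invariant with trivial restriction to $C_F$ (hence of the form $\chi\circ N_{E/F}$ for a character $\chi$ of $C_F$ with $\chi^d=1$), giving a self-twist $\Pi\otimes\omega_d\simeq\Pi$. I would then split into cases according to the length of the $\sigma$-orbit of $\Pi$. If the orbit has proper length $d'\mid d$, then $\Pi$ is $\mathrm{Gal}(E/F')$-invariant for the intermediate field $F' = E^{\langle\sigma^{d'}\rangle}$, and by Arthur--Clozel $\Pi$ is the base change of a cuspidal $\Pi'$ on $GL(n,\A_{F'})$; the hypothesis should then transport to a self-twist of the same Lapid--Rogawski shape for $\Pi'$ and the cyclic extension $F'/F$ of smaller degree $d/d'$, permitting induction on $d$. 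If instead the orbit has full length $d$, Arthur--Clozel's automorphic induction of $\Pi$ from $E$ to $F$ produces a cuspidal $\Pi_0$ on $GL(nd,\A_F)$ whose base change to $E$ is the isobaric sum $\boxplus_{i=0}^{d-1}(\Pi\otimes\omega_i)$.

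In this full-orbit case I would seek the contradiction by comparing the self-twist structure of $\Pi_0$ with the pole structure of Rankin--Selberg $L$-functions $L(s,\Pi_0\times\Pi_0^\vee\otimes\eta)$ for idele class characters $\eta$ of $C_F$: the special shape $\omega|_{C_F}=\varepsilon_{E/F}$, together with the central-character identities imposed by the isobaric decomposition of the base change of $\Pi_0$, should force a nontrivial self-twist of $\Pi_0$ by a character whose order is incompatible with $\Pi_0$ being cuspidal of degree $nd$. The main obstacle, in my view, is exactly this final step: once $d\mid n$ the central-character argument is silent, and extracting the inconsistency requires a careful combination of Arthur--Clozel base change, automorphic induction, and Rankin--Selberg pole analysis, tracking how $\omega$ and $\varepsilon_{E/F}$ interact through the two towers $E/F$ and $F'/F$.
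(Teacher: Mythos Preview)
Your proposal correctly isolates the trivial case $d\nmid n$ via central characters, and the paper also notes this is immediate. But for the main case $d\mid n$ there is a genuine gap: neither branch of your dichotomy is actually carried through.

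In the short-orbit branch you want to descend $\Pi$ to a cuspidal $\Pi'$ on $GL(n,\A_{F'})$ and then claim that ``the hypothesis should transport to a self-twist of the same Lapid--Rogawski shape.'' But for that you would need ${}^{\sigma'}\Pi'\simeq\Pi'\otimes\omega'$ with $\omega'|_{C_F}=\varepsilon_{F'/F}$. There is no reason $\omega$ is $\langle\sigma^{d'}\rangle$-invariant, so it need not descend to $F'$; and even if it did, $\omega|_{C_F}=\varepsilon_{E/F}$ has order $d$, not $d'=[F':F]$, so the restriction condition is wrong. The ambiguity in the fibres of base change (a twist by a power of $\varepsilon_{E/F'}$) does not fix this. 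In the full-orbit branch you form $\Pi_0=AI_E^F(\Pi)$ and hope to find a forbidden self-twist via Rankin--Selberg poles. But the only self-twist your data visibly produces is $\Pi_0\otimes\varepsilon_{E/F}\simeq\Pi_0$, which is \emph{exactly} the self-twist that any cuspidal automorphic induction from $E$ carries; it contradicts nothing. You yourself flag this step as ``the main obstacle,'' and indeed the statement was an open conjecture for $n>2$ precisely because no such purely automorphic argument was found.

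The paper's proof is of a completely different nature. It writes down the twisted, character-weighted discrete trace $T_{\mathrm{disc}}(\phi\times\theta;\omega^{-1})$ and invokes its endoscopic stabilisation (Kottwitz--Shelstad, Moeglin--Waldspurger). The heart of the argument is the classification of elliptic endoscopic data for $(\mathrm{Res}_{E/F}GL(n),\theta,\omega)$: when $d\nmid n$ there are none (so the twisted trace vanishes identically); when $d\mid n$ there is a unique one, with group $H=\mathrm{Res}_{E/F}GL(b)$, $n=bd$, and the $L$-embedding $\xi_1:{}^LH\to{}^LG$ is shown to realise, at the level of Hecke matrices, the map
\[
\pi_b\;\longmapsto\;\bigboxplus_{\alpha=1}^{d}\sigma^{\alpha}\pi_b\otimes\eta_\alpha
\]
for suitable characters $\eta_\alpha$ of $C_E$. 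Thus every Hecke-eigenvalue system appearing on the endoscopic side is that of a parabolically induced representation of $GL(n,\A_E)$. Since cuspidal $\Pi$ with ${}^\sigma\Pi\simeq\Pi\otimes\omega$ would contribute to the twisted trace with Hecke data linearly independent from all induced contributions, no such $\Pi$ can exist. This is not an induction on $d$ and uses no Rankin--Selberg input; the work lies in the explicit endoscopic computation (Sections~5--7), which in turn relies on the stabilised twisted trace formula.
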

  
  We note that the theorem is obvious if $d$ does not divide $n$, as one sees by considering the restriction to $C_F$ of the central character of $\Pi$.

\subsection{Galois conjugate cusp forms up to twisting by a character}
From Theorem \ref{nonexistence}, Lapid and Rogawski derive a structure
theorem for cusp forms $\pi$  on $GL(n, \A_E)$, whose Galois conjugate
$\sigma(\pi)$ differs from $\pi$ up to twisting by a character
$\omega$.

Given a cyclic extension
$E/F$ of number fields and an automorphic representation $\pi$ of
$GL(n, \A_F)$, denote by $BC_F^E(\pi)$ the base change lift of $\pi$
to an automorphic representation of   $GL(n, \A_E)$. For a cusp form
$\eta$ on  $GL(n, \A_E)$, let $AI_E^F(\eta)$ denote the automorphic
representation of  $GL(nd, \A_F)$, where $d={\rm deg}(E/F)$, the
existence of which was proved for $nd=2$ in \cite{LL}, and for general
$n,d$ in \cite{AC}. For a field $F$ let $G_F$
denote the absolute Galois group ${\rm Gal}(\bar{F}/F)$ where
$\bar{F}$ denotes an algebraic closure of $F$. At the level of Galois
representations, base change corresponds to the restriction of
representations from $G_F$ to $G_E$, and 
automorphic induction corresponds to the induction of
representations of $G_E$ to $G_F$. 
\begin{theorem}[Statement B, p. 179, \cite{LR}]\label{galoisconjugate}
Let $E$ be a number field and $\sigma$ an automorphism of $E$ of order
$d$. Let $F$ be the field left fixed by the subgroup of automorphisms
of $E$ generated by $\sigma$. Let $\omega$ be an idele class character
of $E$ and $\pi$ be a cuspidal representation of $GL(n, \A_E)$ such
that $\sigma(\pi)\simeq \pi\otimes \omega$. Let $K/F$ be the extension
corresponding to the character $\omega_F$ of $C_F$, and let
$L=KE$. Then
\begin{enumerate}
\item $K\cap E=F$ and $[K:F]$ divides $n$. Let $r=n/[K:F]$ and let
  $\tau$ be the unique extension of $\sigma$ to $L$ trivial on
  $K$. 

\item There exists a cuspidal representation $\pi_0$ of $GL(r, \A_K)$
  such that 
\[\pi=AI_L^E(BC_{L/K}(\pi_0)\otimes \psi),\]
where $\psi$ is a Hecke character of $L$ such that
$\tau(\psi)\psi^{-1}=\omega^L$. 

\item Conversely, given $\pi_0$ and $\psi$ as in (ii), the
  representation 
\[\pi= AI_L^E(BC_{L/K}(\pi_0)\otimes \psi)\]
satisfies $\sigma(\pi)\simeq \pi\otimes \omega$. However $\pi$ need
not be cuspidal. 
\end{enumerate}
\end{theorem}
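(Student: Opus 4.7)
The plan is to handle the converse direction (iii) first as a direct verification, then deduce (i) and (ii) by passing to the compositum $L=KE$, constructing $\psi$ via Hilbert~90, and invoking Theorem~\ref{nonexistence} at the one place where it is essential, namely to force $K\cap E=F$.

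For (iii), given $\pi_0$ on $GL(r,\A_K)$ and $\psi$ on $C_L$ with $\tau(\psi)\psi^{-1}=\omega^L$, I set $\pi=AI_L^E(BC_{L/K}(\pi_0)\otimes\psi)$ and apply $\sigma$. Galois functoriality of automorphic induction gives $\sigma(\pi)\simeq AI_L^E(\tau(BC_{L/K}(\pi_0)\otimes\psi))$; since $\tau|_K=1$ it fixes $BC_{L/K}(\pi_0)$, and by assumption $\tau(\psi)=\psi\omega^L$. The projection formula $AI_L^E(\rho\otimes\omega^L)\simeq AI_L^E(\rho)\otimes\omega$ then yields $\sigma(\pi)\simeq\pi\otimes\omega$. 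Failure of cuspidality in (iii) corresponds to $BC_{L/K}(\pi_0)\otimes\psi$ being stabilized by a nontrivial subgroup of $\mathrm{Gal}(L/E)$.

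For the forward direction, I first compare central characters: $\sigma(\chi_\pi)=\chi_\pi\omega^n$, and restricting to $C_F$ (where $\sigma$ acts trivially) gives $\omega_F^n=1$, so $[K:F]\mid n$. The harder assertion $K\cap E=F$ I would prove by contradiction, and this is where Theorem~\ref{nonexistence} enters. Setting $E'=K\cap E$ and $s=[E':F]$, if $s>1$ then $\omega_F^{m/s}$ (with $m=[K:F]$) has order $s$ and cuts out $E'$. Iterating the relation yields $\sigma^j(\pi)\simeq\pi\otimes\omega_j$ with $\omega_j=\prod_{i=0}^{j-1}\sigma^i(\omega)$ and $\omega_j|_{C_F}=\omega_F^j$. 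By judiciously choosing $j$ and modifying $\pi$ via a twist $\pi\otimes\phi$, which shifts $\omega$ by the coboundary $\sigma(\phi)\phi^{-1}$, one manufactures a cuspidal representation on a suitable $GL$ whose associated Galois-twist character restricts to $\varepsilon_{E/E'}$ (or the analogous character for a subextension), directly contradicting Theorem~\ref{nonexistence}. This combinatorial and cohomological reduction is the main technical obstacle.

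With $K\cap E=F$ in hand, $\mathrm{Gal}(L/F)=\mathrm{Gal}(L/K)\times\mathrm{Gal}(L/E)$ and the unique $\tau\in\mathrm{Gal}(L/F)$ extending $\sigma$ and trivial on $K$ generates $\mathrm{Gal}(L/K)$. The character $\omega^L:=\omega\circ N_{L/E}$ satisfies $\omega^L|_{C_K}=\omega_F\circ N_{K/F}=1$ by definition of $K$, so Hilbert~90 applied to the cyclic extension $L/K$ of order $d$ produces a Hecke character $\psi$ of $L$ with $\tau(\psi)\psi^{-1}=\omega^L$. The representation $\Pi:=BC_E^L(\pi)\otimes\psi^{-1}$ on $GL(n,\A_L)$ is $\tau$-invariant, verified via $\tau(BC_E^L(\pi))=BC_E^L(\sigma(\pi))=BC_E^L(\pi)\otimes\omega^L$ together with the cocycle relation for $\psi$; cyclic base change descent (Arthur--Clozel) gives $\Pi=BC_K^L(\Pi_K)$ for an automorphic representation $\Pi_K$ of $GL(n,\A_K)$. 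Using cuspidality of $\pi$ together with Theorem~\ref{nonexistence} once more to rule out that $\pi$ be automorphically induced from a proper intermediate field strictly between $E$ and $L$, $BC_E^L(\pi)$ decomposes as an isobaric sum of $m=[L:E]$ cuspidal summands on $GL(r,\A_L)$ with $r=n/m$. Each summand, twisted by $\psi^{-1}$, is individually $\tau$-invariant and descends to a cuspidal representation on $GL(r,\A_K)$; taking any one such descent as $\pi_0$ and reversing the construction recovers $\pi=AI_L^E(BC_{L/K}(\pi_0)\otimes\psi)$.
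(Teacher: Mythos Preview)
Your argument for (iii) and the central-character bound $[K:F]\mid n$ are fine, and your overall architecture---pass to $L$, use Hilbert~90 to produce $\psi$, then descend to $K$---is reasonable. But there is a genuine gap at the point where you write ``Each summand, twisted by $\psi^{-1}$, is individually $\tau$-invariant.'' You have only shown that $\Pi=BC_E^L(\pi)\otimes\psi^{-1}$ is $\tau$-invariant \emph{as an isobaric sum}. Writing $BC_E^L(\pi)=\boxplus_{\gamma\in\mathrm{Gal}(L/E)}\gamma(\pi_L)$, what the $\tau$-invariance of $\Pi$ gives you is only that $\tau(\pi_L)\simeq\gamma_0(\pi_L)\otimes\omega^L$ for \emph{some} $\gamma_0\in\mathrm{Gal}(L/E)$, not that $\gamma_0=1$. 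Equivalently, there exists \emph{an} extension $\tau'=\gamma_0^{-1}\tau$ of $\sigma$ to $L$ with $\tau'(\pi_L)\simeq\pi_L\otimes\omega^L$; the issue is to prove that this $\tau'$ is the specific one whose fixed field is $K$. Without this, you cannot descend $\pi_L\otimes\psi^{-1}$ to $K$, and your $\pi_0$ does not exist on $GL(r,\A_K)$.

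This identification of the fixed field is exactly the technical heart of the paper's proof, and it is not a formality. The paper proceeds by climbing from $E$ to $L$ one prime-degree step $E\subset E'$ at a time: at each step one writes $\pi=AI_{E'}^E(\pi')$, uses the description of the fibres of automorphic induction to choose an extension $\sigma'$ of $\sigma$ with $\sigma'(\pi')\simeq\pi'\otimes\omega'$, and then \emph{proves} that the fixed field $F'$ of $\sigma'$ equals the corresponding prime-degree piece $K'$ of $K$. This last point requires a case analysis ($F'=F$ versus $F'\neq F$), and the case $F'=F$ is eliminated by showing it would force $\pi$ to be non-cuspidal. Your sketch for $K\cap E=F$ (``by judiciously choosing $j$ \ldots\ one manufactures \ldots'') is also too vague; the paper's argument is a genuine double induction on $(d,n)$ with a separate treatment of the sub-case $K\subset E$. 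Finally, you invoke Arthur--Clozel descent for the cyclic extension $L/K$ of degree $d$, but descent for cyclic extensions of non-prime degree is itself deduced in the paper from part~(i) of this theorem, so using it as a black box here is circular unless you set up the same induction.
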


\subsection{Solvable base change} The following theorem characterizing
the image and fibres of the base change transfer for solvable
extensions of number fields was established in \cite{R}: 
\begin{theorem}\label{sbc}
Let $E/F$ be a  solvable extension of number fields, and let $\Pi$ be
a unitary, cuspidal automorphic representation of  
$GL_n(\mathbf{A}_E)$. 
\begin{enumerate}
\item Suppose $\Pi$  is ${\rm
Gal}(E/F)$-invariant.  
Then there exists a ${\rm Gal}(E/F)$-invariant Hecke character
$\psi$ of $E$, 
 and a cuspidal automorphic representation $\pi$ of  
$GL_n(\mathbf{A}_E)$ such that 
\[ BC_{E/F}(\pi) \simeq \Pi\otimes \psi.\]
Further $\psi$ is unique up to base change to $E$ of a Hecke
character of $F$. 

\item Suppose there exist cuspidal automorphic
representations $\pi,~\pi'$ of  $GL_n(\mathbf{A}_F)$
such that,
\[ BC_{E/F}(\pi)=BC_{E/F}(\pi')=\Pi.\]
Then there exists a character $\chi$ of $C_F$ corresponding via class
field theory to a character of ${\rm Gal}(E/F)$, such that 
\[\pi'\simeq \pi\otimes \chi.\]
Morever if $\chi$ is non-trivial,  the representations $\pi$ and
$\pi\otimes \chi$ are distinct. 

\end{enumerate}
\end{theorem}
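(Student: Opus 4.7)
The plan is to establish both parts simultaneously by induction on $[E:F]$. The base case is $E/F$ cyclic of prime degree, where Arthur--Clozel characterises the image as the $\mathrm{Gal}(E/F)$-invariant cusp forms (so the trivial $\psi$ suffices) and the fibre as the orbit under twisting by idele class characters of $C_F$ factoring through $\mathrm{Gal}(E/F)$. For the inductive step, $G := \mathrm{Gal}(E/F)$ is solvable and nontrivial, so pick a normal subgroup $H \triangleleft G$ with $G/H$ cyclic of prime order, set $F' := E^H$, and apply the inductive hypothesis to the solvable extension $E/F'$ of strictly smaller degree. Theorem \ref{galoisconjugate} will supply the structural ingredient needed to convert twisted Galois equivariance over $F'$ into honest descent to $F$.

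For part (i), assume $\Pi$ is $G$-invariant, hence $H$-invariant. Induction on $E/F'$ yields a cuspidal $\pi_1$ on $GL_n(\mathbf{A}_{F'})$ and an $H$-invariant Hecke character $\psi_1$ of $E$ with $BC_{E/F'}(\pi_1) \simeq \Pi \otimes \psi_1$. Fixing a lift $\tilde{\sigma} \in G$ of a generator $\sigma$ of $\mathrm{Gal}(F'/F)$, the $G$-invariance of $\Pi$ together with the $H$-invariance of $\psi_1$ yields $BC_{E/F'}(\sigma \pi_1) \simeq \Pi \otimes \tilde{\sigma}\psi_1$, and the inductive fibre statement for $E/F'$ then produces a twisted equivariance $\sigma \pi_1 \simeq \pi_1 \otimes \omega$ for some Hecke character $\omega$ on $F'$. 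Applying Theorem \ref{galoisconjugate} to $(\pi_1,\omega,F'/F)$ produces an extension $K/F$ with $K \cap F' = F$ and $[K:F] \mid n$, a cuspidal $\pi_0$ on $GL_r(\mathbf{A}_K)$, and a Hecke character on $L = KF'$ realising $\pi_1$ as an automorphic induction from $L$. The linear disjointness of $K$ and $F'$ over $F$ is precisely what permits constructing a cuspidal $\pi$ on $F$ as an automorphic induction from $K$, so that Mackey's identity $BC_{F'/F} \circ AI_K^F \simeq AI_L^{F'} \circ BC_{L/K}$ recovers $\pi_1$ from $BC_{F'/F}(\pi)$ up to a twist pulled back from $F$. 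Base changing once more to $E$ and collecting the accumulated characters yields $BC_{E/F}(\pi) \simeq \Pi \otimes \psi$ for an explicit $G$-invariant character $\psi$; uniqueness of $\psi$ modulo base change from $F$ is covered by part (ii).

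For part (ii), suppose $BC_{E/F}(\pi) \simeq BC_{E/F}(\pi')$ and set $\pi_1 := BC_{F'/F}(\pi)$, $\pi_1' := BC_{F'/F}(\pi')$. In the cuspidal case, the inductive fibre for $E/F'$ gives $\pi_1' \simeq \pi_1 \otimes \eta$ with $\eta$ factoring through $\mathrm{Gal}(E/F')$; since both $\pi_1$ and $\pi_1'$ are $\sigma$-invariant (they descend from $F$), $\eta$ itself must be $\sigma$-invariant, and class field theory then produces a character $\chi$ of $C_F$ factoring through $\mathrm{Gal}(E/F)$ with $\chi^{F'} = \eta$. Applying the base case to $F'/F$ yields $\pi' \simeq \pi \otimes \chi \otimes \chi_0$ with $\chi_0$ factoring through $\mathrm{Gal}(F'/F) \subset \mathrm{Gal}(E/F)$, proving the fibre statement. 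The non-cuspidal case reduces, via the Arthur--Clozel identification of $\pi$ as an automorphic induction, to a fibre problem on a smaller general linear group, handled by a further induction. The distinctness assertion follows from the rigidity of cusp forms under twisting.

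The main obstacle is the inductive step of part (i): $\pi_1$ is only twist-equivariant under $\mathrm{Gal}(F'/F)$, not genuinely invariant, and converting this into honest descent to $F$ requires the full structural content of Theorem \ref{galoisconjugate}, which in turn rests on Theorem \ref{nonexistence}. Once $\pi_1$ is realised as an automorphic induction from a field $K$ linearly disjoint from $F'$ over $F$, the desired descent $\pi$ on $F$ is obtained by performing the analogous automorphic induction from $K$; all remaining work --- verifying $G$-invariance of the final twist $\psi$, propagating fibre compatibilities through the tower, and treating non-cuspidal intermediate base changes --- is routine bookkeeping of Hecke character twists.
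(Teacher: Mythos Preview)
Your inductive architecture --- filtering through a prime-degree subextension $F'/F$ at the bottom and applying the hypothesis to $E/F'$ --- matches the paper's. But there are genuine gaps in both parts, and in Part~(1) the paper takes a substantially shorter route.

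In Part~(2), the claim that $\eta$ is $\sigma$-invariant does not follow from the $\sigma$-invariance of $\pi_1,\pi_1'$ alone: from $\pi_1' \simeq \pi_1 \otimes \eta$ and $\sigma\pi_i \simeq \pi_i$ you only get $\pi_1 \simeq \pi_1 \otimes \sigma(\eta)\eta^{-1}$. To conclude $\sigma(\eta)=\eta$ you must invoke the cuspidality of $\Pi$: a nontrivial self-twist of $\pi_1$ by a character of $\mathrm{Gal}(E/F')$ would make $\pi_1$ automorphically induced from a subfield of $E$, so $\Pi = BC_{E/F'}(\pi_1)$ would fail to be cuspidal. The paper carries this out explicitly. (Also: since $\Pi$ is cuspidal, the intermediate $\pi_1,\pi_1'$ are automatically cuspidal; your ``non-cuspidal case'' does not arise.)

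In Part~(1) there are two issues. First, the fibre statement for $E/F'$ does not directly yield $\sigma\pi_1 \simeq \pi_1 \otimes \omega$, because $BC_{E/F'}(\sigma\pi_1) = \Pi \otimes \tilde\sigma\psi_1$ and $BC_{E/F'}(\pi_1) = \Pi \otimes \psi_1$ are not equal. The paper proves a separate lemma (Lemma~\ref{char}): if $\Pi$ and $\Pi \otimes \chi$ both lie in the image of base change and $\chi$ is $H$-invariant, then $\chi$ itself is a base change. This descends $\tilde\sigma\psi_1\,\psi_1^{-1}$ to $F'$, after which the fibre statement applies. Second, and more importantly, the paper does not use the automorphic-induction content of Theorem~\ref{galoisconjugate} at all. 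Instead it shows, by a direct norm computation, that $\omega_F \circ N_{E/F}$ is trivial, and then Part~(1) of Theorem~\ref{galoisconjugate} (the disjointness $K \cap F' = F$) forces $\omega_F$ itself to be trivial. With $\omega_F = 1$, Hilbert~90 produces $\alpha$ on $C_{F'}$ with $\alpha\,\sigma(\alpha)^{-1} = \omega$, so $\pi_1 \otimes \alpha$ is genuinely $\mathrm{Gal}(F'/F)$-invariant and descends to a cuspidal $\pi$ on $F$ by prime-cyclic base change; the $G$-invariance of the accumulated twist is then checked by an explicit (and not entirely trivial) computation.

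Your plan --- invoking the full structure theorem to realise $\pi_1$ as $AI_L^{F'}(\cdots)$ and then building $\pi$ as $AI_K^F(\cdots)$ via a Mackey identity --- would, if $K$ were genuinely nontrivial, leave you with the burden of proving that the induced $\pi$ is cuspidal (cf.\ the caveat in Theorem~\ref{galoisconjugate}(3)). In fact $K = F$ here, so the machinery collapses to the paper's argument; but you have not shown this, and without it the route you sketch is incomplete.
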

Theorem \ref{sbc} follows by an inductive argument from Theorem
\ref{galoisconjugate}. 

\begin{remark}
Suppose $E/F$ is a solvable extension with the
property that invariant idele class characters of $E$ descend to
$F$. Then any invariant, unitary, cuspidal automorphic representation
of  $GL_n(\mathbf{A}_E)$ lies in the image of the base change map
$BC_{E/F}$.
In particular, we recover the classical formulation that invariant,
unitary, cuspidal  
automorphic representations descend if $E/F$ is cyclic. 
\end{remark}

\begin{remark} The motivation for this theorem stems from the following analogous Galois theoretic situation: let $E/F$ be a Galois extension of number fields, and $\rho: G_E\to GL_n(\C)$ an irreducible representation of $G_E$. Suppose that $\rho$ is invariant under the action of $G_F$ on the collection of representations of $
G_E$. By an application of Schur's lemma, it can be seen that $\rho$ extends as a projective representation, say $\tilde{\eta}$ to $G_F$. By a theorem of Tate on the vanishing of $H^2(G_F, \C^\times)$ (\cite{S1}), this representation can be lifted to a linear representation $\eta$ of $G_F$. This implies that  $\rho\otimes \chi$ descends to a representation of $G_F$ for some character $\chi$ of $G_E$.  
\end{remark}

\begin{acknowledgement}

Clozel's work was partially supported by the National Science foundation under Grant No. DMS-1638352.

We thank J.-L. Waldspurger for useful correspondence.

\end{acknowledgement}

\section{Proof of Theorem \ref{galoisconjugate}}
In this section, assuming the validity of Theorem \ref{nonexistence},
we give a proof of Theorem \ref{galoisconjugate}
 modifying
the arguments given in \cite{LR}. 

Let $E/F$ be a cyclic extension of degree $d$, and let $\sigma$ denote
a generator for ${\rm Gal}(E/F)$. We want to classify the idele class
characters $\omega$ of $E$ and cusp forms $\pi$ on $GL(n,\A_E)$
satisfying the condition $\sigma(\pi)\simeq \pi\otimes \omega$. 
At the level of central characters, this implies
$\sigma(\chi_{\pi})=\chi_{\pi}\omega^n$, where $\chi_{\pi}$ denotes
the central character of $\pi$. In particular this implies that the
restriction $\omega_F$, of $\omega$ to $C_F$, has finite order
dividing $n$. Let $K/F$ be the cyclic extension of $F$ corresponding
to $\omega_F$. 

We first show that $K\cap E=F$.  We prove this by induction on the
pair $(d,n)$,  assuming the validity of the claim 
for all extensions $E/F$ of degree less than $d$,  or  for cusp
forms on $GL_m$ for $m<n$. When $n=1$ or $d=1$, the assertion is
clearly true. So, we now assume that $d>1$ and $n>1$. 

We initially rule out the following case: $K\subset E$ and $K$ is not
equal to $F$. The case $E=K$ is ruled out by Theorem
\ref{nonexistence}. Let 
$F\subset K\subset E'\subset E$ be an extension of fields, such that 
the degree of $E/E'$ is $p$, for some rational prime $p$. 
Let $m=[E':F]$. The group ${\rm Gal}(E/E')$ is generated by
$\sigma^m$. We observe, 
\[\sigma^m(\pi)\simeq \sigma^{m-1}(\pi\otimes \omega)\simeq
\cdots\simeq \pi\otimes
\psi,\]
where $\psi=\omega\sigma(\omega)\cdots\sigma^{m-1}(\omega)$.   The
restriction $\psi_{E'}$ of $\psi$ to $E'$ satisfies,  
\[ \psi_{E'}=\psi_F\circ N_{E'/F}. \]
Since $K\subset
E'$, $\psi_{E'}$ is trivial. Consequently, there exists an idele class
character $\eta$ of $C_E$ such that $ \eta/\sigma^m(\eta)=\psi.$
Hence, 
\[\sigma^m(\pi\otimes \eta)\simeq \sigma^m(\pi)\otimes
\sigma^m(\eta)\simeq \pi\otimes \psi \otimes \sigma^m(\eta)\simeq \pi\otimes
\eta.\] 
By the descent theorem for cusp forms invariant with respect to a
cyclic extension of prime degree (\cite{AC}), there exists a cuspidal
automorphic representation $\rho'$ of $GL_n(\A_{E'})$ such that 
\[ BC_{E'}^E(\rho')=\pi\otimes \eta.\]
Let $\sigma'$ denote the restriction of $\sigma$ to $E'$. Then, 
\[  BC_{E'}^E(\sigma'(\rho'))\simeq \sigma(\pi\otimes \eta)\simeq
\pi\otimes \omega \otimes \sigma(\eta)\simeq  BC_{E'}^E(\rho')\otimes 
\sigma(\eta)\eta^{-1}\omega.\]
We observe now that $\sigma(\eta)\eta^{-1}\omega$ is invariant under ${\rm
  Gal}(E/E')=<\sigma^m>$. From the definition of $\psi$, it follows that 
$\sigma(\psi)\psi^{-1}=\sigma^m(\omega)\omega^{-1}$. Hence, 
\[\sigma^m(\sigma(\eta)\eta^{-1}\omega)=\sigma(\sigma^m\eta)\sigma^m(\eta)^{-1}
\sigma^m(\omega)=
\sigma(\eta)\sigma(\psi)^{-1}\psi\eta^{-1}\sigma(\psi)\psi^{-1}\omega
=\sigma(\eta)\eta^{-1}\omega.\]
Hence there exists an idele class character $\theta$ of $E'$ such that
$\theta\circ N_{E/E'}= \sigma(\eta)\eta^{-1}\omega$. We have, 
\[  BC_{E'}^E(\sigma'(\rho'))\simeq BC_{E'}^E(\rho'\otimes \theta).\]
From the characterization of the fibres of base change for cyclic
extensions of prime degree, there exists  an idele class character
$\chi$ of $E'$ such that 
\[ \sigma'(\rho')\simeq \rho'\otimes \chi.\]
The base change of the character $\chi$ to $E$ is
$\sigma(\eta)\eta^{-1}\omega$. Hence $\chi_F^p=\omega_F$, and $\chi_F$
defines an extension $K'$ of $F$ containing $K$. Since the degree
$[E':F]$ is less than that of $[E:F]$, by the inductive hypothesis
$K'\cap E'=F$. This implies that $K=K\cap E=F$. Hence we have ruled
out the case that $K\subset E$ and $K\neq F$. 

Let $N=K\cap E$. We want to show $N=F$. Assume now that $\omega_F$ is
not trivial. The field $K$ associated to $\omega_F$ is a non-trivial
cyclic extension of $F$. 
By what we have shown above,
$K\cap E$ is a proper subfield of $K$. Let $K'\subset K$ be a subfield
of $K$ containing $N$, such that its degree over $N$ is a rational
prime $p$.   We have, 
\[\pi\simeq \sigma^d(\pi)\simeq
 \pi\otimes\omega\sigma(\omega)\cdots\sigma^{d-1}(\omega)\simeq
 \pi\otimes \omega_F\circ N_{E/F}.\]
By class field theory, the character $\omega_F\circ N_{E/F}$
corresponds to the cyclic extension $L=KE$ over $E$. The field
$E'=EK'$ is an extension of $E$ of degree $p$ contained inside
$L$. The isomorphism $\pi\simeq \pi\otimes\omega_F\circ N_{E/F}$
implies an isomorphism  $\pi\simeq \pi\otimes(\omega_F\circ
N_{E/F})^k$ for any natural number $k$. Taking $k=[L:E']$, we have 
$\pi\simeq \pi\otimes \varepsilon_{E'/E}$, where $\varepsilon_{E'/E}$ is an
idele class character of $E$ corresponding via class field theory to
the extension $E'/E$. By the characterization of automorphic induction
(\cite{AC}), there exists a cusp form $\pi'$ on $GL(n/p, \A_{E'})$
such that 
\[\pi\simeq AI_{E'}^E(\pi').\]
Let $\sigma' \in Gal(E'/F)$ be an extension of $\sigma$ to $E'$. Then
\[AI_{E'}^E(\sigma'(\pi'))\simeq \sigma(\pi)\simeq \pi\otimes
\omega\simeq 
AI_{E'}^E(\pi'\otimes \omega'),\]
where $\omega'=\omega\circ N_{E'/E}$ is the base change of $\omega$
to $E'$. From the characterization of the fibres of automorphic
induction with respect to a cyclic extension of prime degree, it
follows that there exists an automorphism $\tau\in {\rm Gal}(E'/E)$
such that $\tau\sigma'(\pi)\simeq \pi'\otimes \omega'$. The
automorphism $\tau\sigma'$ of $E'$ extends $\sigma$. Renaming
$\tau\sigma'$ as $\sigma'$, we see that there exists an automorphism
$\sigma'$ of $E'$ extending $\sigma$ such that,
\[ \sigma'(\pi')\simeq \pi'\otimes \omega'.\]
Let $F'$ be the
fixed field of $\sigma'$. Since the fixed field of $\sigma$ is $F$,
$E\cap F'=F$. Further $d$ divides the degree of $E'$ over $F'$ as $\sigma'\vert E=\sigma$, and
$[E':F]=dp$. There are two cases: either $F'=F$ or $F'\neq F$. 

 Suppose $F'=F$. 
Then $E'$ is a cyclic extension of $F$ of
degree $dp$, and $\omega'_F=\omega_F^p$. If $p$ divides $[N:F]$, then
$p^2$ divides $[K':F]=p[N:F]$. It follows that the extension of $F$ defined by
$\omega_F^p$ has a non-trivial intersection with $K'\subset E'$. By 
the inductive hypothesis, the extension of $F$ defined by
$\omega_F'=\omega_F^p$ should be disjoint with $E'$ over $F$. 

This contradiction leads to saying that $p$ is coprime to $[N:F]$. But
then $N$ is contained in the cyclic  extension $K{''}$ of $F$ cut out by
$\omega_F^p$. By the inductive hypothesis, $K{''}\cap E'=F$. It follows that $N=K\cap
E=F$.

Before taking up the case $F'\neq F$, we now rule out the case $F'=F$ by showing that $\pi$ is not cuspidal in this case. Since $\omega_F\circ N_{E/F}$ corresponds to the extension $L$ of $E$ and $E'\subset E$, 
the character $\omega'_F\circ N_{E'/F}=\omega_F^p\circ
N_{E'/F}$ corresponds to the extension $L=KE=K{''}E'$. 
The above process can be continued, and 
the representation  
$\pi'$ (and hence $\pi$) 
is automorphically induced from a cuspidal representation
$\pi_L$ of $GL(n/r, \A_L)$ where $r=[L:E']$. The equation $\sigma'(\pi')\simeq \pi'\otimes \omega'$ implies that $\pi_L$ satisfies
the condition, 
\[ \sigma{''}(\pi_L)\simeq \pi_L\otimes \omega^L,  \]
for some automorphism $\sigma{''}$ of $L$ exending $\sigma'$ on $E'$, and of order equal to $dp$, the order of $\sigma'$. 
Now  $\omega^L_K$ is the trivial character. 
The automorphism
$\sigma{''}^d$ is a non-trivial automorphism of $L$ trivial on $E$, as
$\sigma{''}$ extends $\sigma$ on $E$. But 
\[ \sigma{''}^d(\pi_L)\simeq \pi_L\otimes \omega^L\circ N_{L/K}\simeq
\pi_L.\]
This implies that $\pi$ is not cuspidal, contrary to our hypothesis. 
Hence this rules out the case that $F'=F$.

Hence we are in the situation 
that $F'\neq F$. Since $d$ divides $[E':F']$ and $[E':F]=dp$,  $[E':F']=d$, we see that
$[F':F]=p$ ; moreover we saw that $F'\cap E=F$.  The
extension $E'$ over $F$ is a compositum of the linearly disjoint
extensions $E$ and $F'$ over $F$. The character
$\omega'_{F'}=\omega_F\circ N_{F'/F}$, corresponds to the extension
$KF'$ which contains the compositum $NF'\subset E'$. By the induction
hypothesis,  $KF' \cap E'=F'$. Hence $NF'=F'$, and this implies that $N=K\cap E=F$.

The compositum of the fields $K'$ and $F'$ is contained inside both
$K$ and $E'$. Hence it is disjoint from $E$. If $K'\neq F'$, then the
degree of the compositum $[K'F':F]=p^2$. This contradicts the fact
that the degree of $E'$ over $F$ is $dp$. Hence $F'=K'$. 

We also
obtain that the representation $\pi$ is automorphically induced from a cuspidal
representation $\pi'$ of $GL(n/p. \A_{E'})$, satisfying
\[ \sigma'(\pi')\simeq \pi'\otimes \omega'. \] 
The field $E'$ is also the compositum of the fields $E$ and
$K'$. 
The extension of $E$ defined by the character $\omega'_{F'}\circ
N_{E'/F'}$ is $L$. This can be continued, and we obtain a cuspidal
representation $\pi_L$ of $GL(n/r, \A_L)$ such that 
\[ \pi\simeq AI_L^E(\pi_L),\]
where $r=[L:E]$. The representation $\pi_L$ satisfies, 
\[ \tau(\pi_L)\simeq \pi_L\otimes \omega^L,\]
where $\tau$ is the unique automorphism of $L$ extending $\sigma$ such
that the fixed field of $L$ by $\tau$ is equal to $K$. 

Since $\omega^L_K$ is trivial, 
there exists a Hecke character $\psi$ of $L$ such that
$\tau(\psi)\psi^{-1}=\omega^L$. Then, 
\[ \tau(\pi_L\otimes \psi^{-1})\simeq \pi_L\otimes \omega^L\otimes \tau(\psi)^{-1}
\simeq \pi_L\otimes \psi^{-1}.\]
Hence the cuspidal representation $\pi_L\otimes \psi^{-1}$ is
invariant with respect to the cyclic automorphism group ${\rm
  Gal}(L/K)$ generated by $\tau$. To complete the proof of
Theorem \ref{galoisconjugate},
 we now have to establish descent for cyclic extensions. 

Let $E/F$ be a cyclic extension of degree $d$ and $\pi$ be a cuspidal
representation of $GL_n(\A_E)$ invariant under the action of ${\rm
  Gal}(E/F)=<\sigma>$. Choose an extension $F\subset E'\subset E$ such that
$[E:E']=p$ for some rational prime $p$. By the descent for cyclic
extensions of prime degree, there exists a cuspidal representation
$\pi'$ of  $GL_n(\A_{E'})$ which base changes to $\pi$. We need to
show that $\pi'$ is left invariant by  ${\rm Gal}(E'/F)$. Suppose
$\sigma(\pi')\simeq \pi'\otimes \varepsilon$, where 
$\varepsilon$ is an idele class character of $E'$ which corresponds to
the extension $E/E'$ via class field theory. Since $E/F$ is cyclic,
$\varepsilon=\eta\circ N_{E'/F}$ for some idele class character
$\eta$ of $F$ defining the cyclic extension $E/F$ by class field
theory. Then $\varepsilon_F=\eta^{d/p}$. Since $\eta$ is of order $d$,
$\varepsilon_F$ defines a non-trivial cyclic extension $F{''}$  of degree $p$
of $F$ contained inside $E$. Since $E/E'$ is a cyclic extension of
degree $p$, this implies that $F{''}\subset E'$, thus $E' \cap F'' \neq F$.

This contradicts the first part of
Theorem \ref{galoisconjugate} proved above. Hence $\pi'$ is left
invariant by  ${\rm Gal}(E'/F)$, and by induction can be descended to
a cuspidal representation of $GL_n(\A_F)$.

\section{Proof of Theorem \ref{sbc}}
In this section, we deduce Theorem \ref{sbc} from Theorem
\ref{galoisconjugate}, following the arguments given in \cite{R}. 
\begin{lemma}
 Let $E/F$ be a solvable extension of number fields. Suppose
$\pi$ is a cuspidal automorphic representation of
$GL_n(\A_F)$  such that its base change to $E$ remains cuspidal. 
Let $\chi$ be a non-trivial idele class character on $F$, such that
$\chi\circ N_{E/F}$ is trivial, where $N_{E/F}:C_E\to C_F$ is the norm
map on the idele classes. Then $\pi$ and $\pi\otimes \chi$ are not
isomorphic.  
\end{lemma}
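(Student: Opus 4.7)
The plan is to argue by contradiction: an isomorphism $\pi\simeq\pi\otimes\chi$ will force $BC_F^E(\pi)$ to decompose non-trivially as an isobaric sum, contradicting the cuspidality hypothesis.

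First, I identify the cyclic subextension of $E$ naturally associated to $\chi$. Since $\chi$ is a nontrivial idele class character of $F$ with $\chi\circ N_{E/F}=1$, class field theory implies that $\chi$ vanishes on $N_{E/F}(C_E)$, so $\chi$ factors through the quotient $C_F/N_{E/F}(C_E)$, which is canonically isomorphic to the Galois group of the maximal abelian subextension of $E/F$. Hence $\chi$ cuts out a nontrivial cyclic extension $K/F$ with $K\subset E$. Pick a prime $\ell$ dividing $[K:F]$, let $K_0\subset K$ be the unique subfield of degree $\ell$ over $F$, and set $\chi_0=\chi^{[K:F]/\ell}$; this is a character of exact order $\ell$ cutting out $K_0$, and raising the relation $\pi\simeq\pi\otimes\chi$ to the appropriate power gives $\pi\simeq\pi\otimes\chi_0$.

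Next I would invoke the Arthur--Clozel characterization of the image of automorphic induction in the cyclic prime degree case (\cite{AC}): the self-twist relation $\pi\simeq\pi\otimes\chi_0$ forces $\ell\mid n$ and the existence of a cuspidal automorphic representation $\rho_0$ of $GL_{n/\ell}(\A_{K_0})$ such that $\pi\simeq AI_{K_0}^F(\rho_0)$. Now I base change to $E$. Since $K_0\subset E$, Mackey's formula on the Galois side reads
\[ \mathrm{Res}^{G_F}_{G_E}\,\mathrm{Ind}^{G_F}_{G_{K_0}} \;\simeq\; \bigoplus_{\sigma\in\mathrm{Gal}(K_0/F)} \mathrm{Res}^{G_{K_0}}_{G_E}(\sigma\cdot), \]
and the corresponding functorial identity on the automorphic side yields
\[ BC_F^E(\pi) \;\simeq\; \boxplus_{\sigma\in\mathrm{Gal}(K_0/F)} BC_{K_0}^E(\sigma\rho_0). \]
This is an isobaric sum of $\ell\geq 2$ summands on $GL_n(\A_E)$, hence not cuspidal, giving the desired contradiction.

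Apart from the appeal to Arthur--Clozel, the only nontrivial point is the compatibility between base change and automorphic induction used in the last step. I do not expect this to be a serious obstacle: it follows from the matching of Frobenius--Hecke eigenvalues at almost all unramified places together with strong multiplicity one for $GL_n$, so it is essentially a routine verification using the explicit description of these lifts for the cyclic prime extension $K_0/F$.
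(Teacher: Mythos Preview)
Your proof is correct and follows essentially the same route as the paper: reduce to a character of prime order, invoke the Arthur--Clozel characterization to realize $\pi$ as automorphically induced from the corresponding prime-degree subextension $K_0\subset E$, and deduce non-cuspidality of the base change. The only difference is that the paper stops at $K_0$ (noting that $BC_F^{K_0}(\pi)$ is already non-cuspidal, which immediately propagates to $E$ since $K_0\subset E$), thereby avoiding the Mackey-type compatibility you invoke at the end; but as you say, that compatibility is routine.
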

\begin{proof} The hypothesis implies
$\pi\simeq \pi \otimes \chi^k$ for any natural number $k$. Hence we
can assume that $\chi$ is of prime order. In this case, $\chi$ cuts
out a cyclic extension $E'$ of prime degree contained in $F$. By the
characterization of automorphically induced representations
(\cite{AC}),  it
follows that the base change of $\pi$ to $E'$ is not cuspidal. This
contradicts the assumption that the base change of $\pi$ to $E$ is
cuspidal. 
\end{proof}

We first prove Part (2) of Theorem \ref{sbc}, characterizing the
fibres of the base change lift for solvable extensions of number
fields. Suppose $\Pi$ is a 
cuspidal automorphic representation of
$GL_n(\A_E)$. Let $\pi, ~\pi'$ be 
cuspidal automorphic representations of
$GL_n(\A_F)$ which base change to $\Pi$. We need to show that
$\pi'\simeq \pi\otimes \chi$ for some Hecke character $\chi$ on $F$
such that $\chi\circ N_{E/F}=1$. 

By the results of  \cite{AC}, the theorem is true for 
cyclic extensions of prime degree. Let $E\supset E_1\supset F$ be a
tower of Galois extensions of $F$,  where $E_1/F$ is of prime
degree. Let $BC_{E_1/F}(\pi)=\pi_1$ and $BC_{E_1/F}(\pi')=\pi'_1$.
By induction, we assume that the theorem is true for the extension
$E/E_1$. We have, $\pi_1'\simeq \pi_1\otimes \chi_1$ for some Hecke
character $\chi_1$ on $E_1$ such that $\chi_1\circ N_{E/E_1}=1$.
 Let $\sigma$ be a generator of ${\rm Gal} (E_1/F)$. We have
\[ \pi_1\otimes \chi_1\simeq \pi_1'\simeq {^{\sigma} \pi'_1} \simeq
{^{\sigma}\pi'_1}\otimes {^{\sigma} \chi_1}\simeq \pi_1\otimes
{^{\sigma}\chi_1}.\]
\[{\rm Hence}\,\,\,\,\,\,\, \pi_1\simeq \pi_1\otimes{^{1-\sigma}
  \chi_1}.\]
If $\chi_1\neq {^{\sigma}\chi_1}$, let $f$ denote the order of
$^{1-\sigma}\chi_1$, $p$ a prime dividing $f$, and let 
\[\nu={^{(1-\sigma)f/p}\chi_1}.\]
$\nu$ is a non-trivial character of ${\rm Gal}(E/E_1)$ of order $p$
satisfying,
\[\pi_1\simeq \pi_1\otimes \nu.\]
It follows  from the characterization of automorphic induction, 
that  $\pi_1$ is automorphically induced from a cuspidal representation
$\pi_{\nu}$ belonging to the class field $E_{\nu}$ defined by $\nu$. But
$E_{\nu}\subset E$, and it follows that $\Pi$ is not cuspidal,
contrary to our assumption on $\Pi$. Hence we have that $\chi_1$ is
invariant by ${\rm Gal}(E_1/F)$ and descends to an idele class character 
$\chi_2$ of $C_F$  such that 
$\chi_1=\chi_2\circ N_{K_1/K_2}$. Then 
\[ BC_{E_1/F}(\pi_2\otimes \chi_2)\simeq \pi_1\otimes \chi_1\simeq
\pi_1'\simeq BC_{E_1/F}(\pi_2').\]
Hence we have a Hecke character $\theta$ corresponding to a character
of ${\rm Gal}(E_1/F)$, such that
\[\pi_2'\simeq \pi_2\otimes \chi_2\theta,\]
and $\chi_2\theta$ defines a character of ${\rm Gal}(E/F)$. This proves Part
(2) of Theorem \ref{sbc}, as the distinction between $\pi$ and
$\pi\otimes \chi$ follows from the properties of automorphic
induction. 

We now move on to proving Part (1) of Theorem \ref{sbc}.  We prove a
preliminary lemma, which also proves the uniqueness assertion about
$\psi$ in Theorem \ref{sbc}.  

\begin{lemma}\label{char}
Let $E/F$ be a solvable extension, and $\Pi$ be a 
cuspidal automorphic representation of $GL_n(\mathbf{A}_K)$. Suppose
$\chi$ is a ${\rm Gal}(E/F)$ invariant idele class  character of $E$,  
such that both $\Pi$ and $\Pi\otimes \chi$
are in the image of base change from $F$. Then $\chi$ lies in the
image of base change. 
\end{lemma}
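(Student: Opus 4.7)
The plan is to induct on $[E:F]$. Since ${\rm Gal}(E/F)$ is solvable, one can choose an intermediate field $F \subset E_1 \subset E$ with $E_1/F$ cyclic of prime degree $p$ and $E/E_1$ solvable. Write $\pi, \pi'$ for cuspidal representations of $GL_n(\mathbf{A}_F)$ with $BC_{E/F}(\pi) \simeq \Pi$ and $BC_{E/F}(\pi') \simeq \Pi \otimes \chi$, and set $\pi_1 = BC_{E_1/F}(\pi)$, $\pi_1' = BC_{E_1/F}(\pi')$; these are cuspidal since their further base changes to $E$ are. Since $\chi$ is ${\rm Gal}(E/E_1)$-invariant, the inductive hypothesis applied to the solvable extension $E/E_1$ yields a character $\chi_1$ of $C_{E_1}$ with $\chi_1 \circ N_{E/E_1} = \chi$. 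If one can choose $\chi_1$ to be ${\rm Gal}(E_1/F)$-invariant, then Hilbert 90 for the cyclic extension $E_1/F$ (vanishing of $H^1({\rm Gal}(E_1/F), C_{E_1})$, dualized) descends $\chi_1$ to a character $\chi_F$ of $C_F$, so that $\chi = \chi_F \circ N_{E/F}$ as desired.

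To normalize $\chi_1$, observe that $BC_{E/E_1}(\pi_1') \simeq \Pi \otimes \chi \simeq BC_{E/E_1}(\pi_1 \otimes \chi_1)$, so Part (2) of Theorem \ref{sbc} applied to the smaller solvable extension $E/E_1$ (already proved) gives $\pi_1' \simeq \pi_1 \otimes \chi_1 \otimes \mu$, where $\mu$ is a character of $C_{E_1}$ coming via class field theory from a character of ${\rm Gal}(E/E_1)$, in particular $\mu \circ N_{E/E_1} = 1$. Replacing $\chi_1$ by $\chi_1 \mu$, one may assume $\pi_1' \simeq \pi_1 \otimes \chi_1$. Writing $\sigma$ for a generator of ${\rm Gal}(E_1/F)$ and using that both $\pi_1$ and $\pi_1'$ are ${\rm Gal}(E_1/F)$-invariant as base changes, one obtains $\pi_1 \otimes \chi_1 \simeq \pi_1 \otimes \sigma(\chi_1)$, hence $\pi_1 \simeq \pi_1 \otimes \eta$, where $\eta := \sigma(\chi_1)/\chi_1$. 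A direct check using the ${\rm Gal}(E/F)$-invariance of $\chi$ (together with $N_{E/E_1} \circ \tilde\sigma = \sigma \circ N_{E/E_1}$ for any lift $\tilde\sigma$) shows $\eta \circ N_{E/E_1} = 1$, so $\eta$ corresponds via class field theory to a character of $C_{E_1}/N_{E/E_1}(C_E) \cong {\rm Gal}(E/E_1)$.

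It remains to show $\eta$ is trivial. If $\eta$ were nontrivial, pick a prime $p'$ dividing its order and replace $\eta$ by an appropriate power of prime order (still satisfying $\pi_1 \simeq \pi_1 \otimes \eta$); the characterization of automorphic induction \cite{AC} would then force $\pi_1 \simeq AI_{M/E_1}(\pi_M)$ for the cyclic subextension $E_1 \subset M \subset E$ of degree $p'$ cut out by $\eta$. But then $\Pi = BC_{E/E_1}(\pi_1)$ would fail to be cuspidal, since base-changing an automorphic induction to a field containing $M$ yields an isobaric sum, contradicting the hypothesis on $\Pi$. Hence $\eta = 1$, i.e., $\chi_1$ is ${\rm Gal}(E_1/F)$-invariant, and the descent concludes via Hilbert 90. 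The main obstacle is precisely this invariance step: the fiber characterization in Theorem \ref{sbc}(2) is used to absorb the ambiguity $\mu$ into $\chi_1$, and the cuspidality of $\Pi$ is then invoked decisively to exclude the automorphic-induction alternative.
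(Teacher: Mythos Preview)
Your proof is correct and uses the same essential ingredients as the paper's: descend $\chi$ to a character $\chi_1$ on an intermediate field, use the fibre description (Theorem~\ref{sbc}(2)) to arrange $\pi_1'\simeq\pi_1\otimes\chi_1$, deduce $\pi_1\simeq\pi_1\otimes(\sigma(\chi_1)\chi_1^{-1})$ from the ${\rm Gal}(E_1/F)$-invariance of $\pi_1,\pi_1'$, and then invoke the cuspidality of $\Pi$ together with the characterization of automorphic induction to force $\sigma(\chi_1)\chi_1^{-1}=1$.

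The one organizational difference is where the prime step sits in the tower. You take $E_1/F$ cyclic of prime degree and apply the inductive hypothesis to $E/E_1$ to produce $\chi_1$, then finish by descending $\chi_1$ across the prime step $E_1/F$ via Hilbert~90 for idele classes. The paper instead takes $E/E_1$ prime: it gets $\chi_1$ immediately (invariant characters descend across a prime cyclic step), proves $\chi_1\eta_1$ is ${\rm Gal}(E_1/F)$-invariant by the same cuspidality argument, and then applies the inductive hypothesis to the smaller solvable extension $E_1/F$ to descend $\chi_1\eta_1$ to $F$. The two schemes are mirror images of each other and neither buys anything the other does not; the core step---ruling out a nontrivial $\eta$ because it would make $\Pi$ non-cuspidal---is identical.
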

\begin{proof} The proof is by induction, and is true for cyclic extensions
of prime degree. Assume we have $E\supset E_1 \supset
F$ with $E/E_1$ prime of degree $p$. 
Since $\chi$ is invariant, $\chi=\chi_1\circ N_{E/E_1}$ for some idele class
character $\chi_1$ of $C_{E_1}$. Suppose $\pi_1,~\pi_1'$ are cuspidal
automorphic representations of  $GL_n(\mathbf{A}_{E_1})$ which base
change respectively to $\Pi$ and $\Pi\otimes \chi$. 
Since both $\pi_1'$ and $\pi_1\otimes \chi_1$ base change to $\Pi\otimes
\chi$, by the description of the fibres of base change for a cyclic
extension  of prime degree, we obtain,
\[ \pi_1'\simeq \pi_1\otimes \chi_1\eta_1,\]
for some Hecke character $\eta_1$ of $C_{E_1}$ vanishing on $N_{E/E_1}C_E$.

Assume further, as we may from the hypothesis, that both $\pi_1$ and $\pi_1'$ lie in the
image of base change from $F$ to $E_1$.  For any $\sigma \in
{\rm Gal}(E_1/F)$, 
\[ \pi_1\otimes \chi_1\eta_1\simeq \pi_1'\simeq {\sigma}(\pi_1')\simeq 
\pi_1\otimes \sigma(\chi_1\eta_1).\]
Hence, $\pi_1\simeq \pi_1\otimes \nu$, where 
$\nu=\sigma(\chi_1\eta_1)(\chi_1\eta_1)^{-1}$. 
Since $\chi$ is ${\rm Gal}(E/F)$-invariant, we have 
$^{\sigma} \chi_1=\chi_1\varepsilon_1^i$ and $^{\sigma} \eta_1=\varepsilon_1^j$
for some integers $i, ~j$, $\varepsilon_1$ being a character associated to $E/E_1$. Hence $\nu=\varepsilon_1^l$
for some integer $l$. Since $\Pi$ is cuspidal, the cuspidality
criterion for automorphic induction implies that $\nu=1$. Hence we get
that $\chi_1\eta_1$ is invariant by ${\rm Gal}(E_1/F)$. By induction,
$\chi_1\eta_1$ lies in the image of base change from $F$ to $E_1$,
and it follows that $\chi$ also lies  in the image of base change
from $F$ to $E$. 
\end{proof}

With this lemma, we now proceed to the proof of Part (1) of Theorem
\ref{sbc}.  The proof is by induction on the degree of the extension
$E$ over $F$. By the results of \cite{AC}, it is true for extensions
of prime degree. We now assume  there is a sequence of fields 
 \[E\supset E' \supset F,\]
where $E'/F$ is a cyclic extension of prime degree $p$. By the
inductive hypothesis,  there exists a ${\rm Gal} (E/E')$-invariant
idele class character 
$\psi_0$ of $E$, and a cuspidal automorphic representation $\pi'$ 
of $GL(n,\mathbf{A}_{E'})$ such that 
\[\Pi\otimes \psi_0=BC_{E/E'}(\pi'). \]
Let $\tau'$ be a
generator of ${\rm Gal} (E'/F)$, and let $\tau$ be an element of
${\rm Gal}(E/F)$ lifting $\tau'$. Then 
\[ BC_{E/E'}(\tau'(\pi'))\simeq~ ^{\tau}\Pi \otimes
\tau(\psi_0)\simeq \Pi\otimes \tau(\psi_0)\simeq  (\Pi\otimes
\psi')\otimes \tau(\psi_0)\psi_0^{-1}.\]
Since ${\rm Gal}(E/E')$ is a normal subgroup of ${\rm Gal}(E/F)$, for
any $\sigma \in  {\rm Gal}(E/E')$,  
\[ \sigma(\tau(\psi_0))=\tau(\tau^{-1}\sigma\tau)(\psi_0)=\tau(\psi_0).\]
Hence $ \tau(\psi_0)\psi_0^{-1}$ is ${\rm Gal}(E/E')$-invariant. 
Since both $ \Pi\otimes \psi'$ and 
$ (\Pi\otimes \psi_0)\otimes \psi_0^{-1+\tau}$ lie in the image of
base change from $E'$ to $E$, by Lemma \ref{char}
there exists an idele class character $\chi'$ of $E'$,  such that
$\tau(\psi_0)\psi_0^{-1}=\chi'\circ N_{E/E'}$. Hence, 
\[BC_{E/E'}(\tau'(\pi')\otimes \chi'^{-1})\simeq
  \Pi\otimes \psi_0.\]
By Part (2) of Theorem \ref{sbc}, characterizing the fibres of the base change
lift,  we conclude that there is an idele class character $\chi{''}$
corresponding via class field theory to a character of  ${\rm Gal}(E/E')$,
such that  
\begin{equation}\label{crucial}
 {\tau'}(\pi') \simeq \pi\otimes \chi{'}\chi''=\pi'\otimes \eta',
\end{equation}
where  $\eta'=\chi'\chi''$. Further, 
\begin{equation} \label{eta'}
\eta'\circ N_{E/E'}=\chi'\circ N_{E/E'}=\tau(\psi_0)\psi_0^{-1}.
\end{equation}
Write the elements of 
\[{\rm Gal}(E/F)=\{\tau^{-i}\sigma\mid 0\leq i
<p, ~\sigma\in {\rm Gal}(E/E').\]
We have for $x\in C_E$, 
\begin{equation*}
\begin{split}
\eta'_F\circ N_{E/F}(x) &= \eta'\left(\prod_{i=0}^{p-1}\prod_{\sigma \in {\rm
    Gal}(E/E')}\tau^{-i}\sigma x\right)
= \prod_{i=0}^{p-1}\tau^i\eta'(N_{E/E'}(x))\\
 &=  \prod_{i=0}^{p-1}\tau^i(\tau(\psi_0)\psi_0^{-1})\\
&=\tau^p(\psi_0)\psi_0^{-1}.
\end{split}
\end{equation*}
Since $\psi_0$ is ${\rm Gal}(E/E')$-invariant and $\tau^p\in {\rm
  Gal}(E/E')$,  it follows that $\eta'_F\circ N_{E/F}$ is
trivial. Hence by Part
(1) of Theorem \ref{galoisconjugate}, and Equation (\ref{crucial}),
$\eta'_F$ is trivial.  

Let $\alpha$ be an idele class character of $E'$, satisfying 
$\alpha\tau'(\alpha)^{-1}=\eta'.$ By equation (\ref{crucial}), 
\[\tau'(\pi'\otimes \alpha) =\tau'(\pi')\otimes
 \alpha\eta'^{-1} = \pi'\otimes \alpha.\]
Hence $\pi'\otimes \alpha$ is ${\rm Gal}(E'/F)$-invariant, and 
descends to $F$.  
Hence we obtain that $\Pi\otimes \psi_0\otimes (\alpha\circ N_{E/E'})$ descends. 

To finish the proof, we have to check that
$\psi_0\otimes (\alpha\circ N_{E/E'})$ is ${\rm Gal}(E/F)$-invariant. 
For this it is enough to check that $\psi_0 \otimes 
(\alpha\circ N_{E/E'})$ is $\tau$-invariant:
\begin{equation*}
\begin{split}
\tau(\psi_0\otimes (\alpha\circ N_{E/E'})) 
&= \tau(\psi_0)\otimes \tau(\alpha)\circ N_{E/E'}\\
& =\tau(\psi_0)\otimes (\alpha\circ N_{E/E'})(\eta'\circ N_{E/E'})^{-1}\\ 
&= \psi_0 \otimes (\alpha \circ N_{E/E'}), 
\end{split}
\end{equation*}
where the last equality follows from equation (\ref{eta'}). 

\section{Trace Formula}
We want to prove Theorem \ref{nonexistence} 
ruling out the existence of a cuspidal
representation $\Pi$ of $GL(n, \A_E)$ satisfying Equation
\ref{lrconj},  
\[
^{\sigma}\Pi \simeq \Pi \otimes \omega,  
\]
where $\omega$ is an idele class character of $E$ such that its
restriction to $C_F$ corresponds by Artin reciprocity to a primitive
character of the cyclic group ${\rm Gal}(E/F)$. 

If $\Pi$ satisfies Equation \ref{lrconj}, it will, for a suitable choice of a
function $\phi\in C^{\infty}_c(GL(n, \A_E))$, contribute a non-zero
term to the trace, 
\begin{equation}\label{twistdisctrace}
\mbox{Trace}(I_{\theta}(R_{disc}\otimes \omega^{-1})(\phi)).
\end{equation}
Here $R_{disc}$ is the discrete part of the representation of $GL(n,
\A_E)$ on 
\[{\mathcal A}_2:=L^2(GL(n,E)A\backslash GL(n, \A_E)).\]
Here $A$ is $\R^\times_+$ embedded diagonally into $GL(n, E_w)$ at all
archimedean places $w$ of $E$. The operator $I_{\theta}$ is given by
$\phi(g)\mapsto \phi(\sigma^{-1}(g))$, where $\sigma$ is our chosen
generator of $\Sigma=\mbox{Gal}(E/F)$. 

There is a general formula for the trace in Equation
(\ref{twistdisctrace}) due to Kottwitz-Shelstad (\cite{KS}) and
Moeglin-Waldspurger (\cite{MWII}). (In fact this trace must be completed
by Arthur's ``discrete terms'', which we will describe presently in
our case). The formula is, 
\begin{equation}\label{tf2.1}
T_{disc}(\phi \times \theta ; \omega^{-1})
=\sum_{\G'\subset \cE} \iota(\G')ST_{disc}^{G'}(\phi^{G'}).
\end{equation}
Here $\G'$ runs over the elliptic endoscopic data consisting of
triples of the form $(G', \cG', \tilde{s})$: these will be reviewed in
the next paragraph; $G'$ is a reductive $F$-group and $\phi^{G'}$ is a
function on $G'(\A_F)$ associated to $\phi$. 

The so-called 'stable discrete trace formula' $ST^{G'}_{disc}$ will be
very simple in our case, as $G'$ will be a group $GL(m)$: see paragraph 7. The terms in
\[T_{disc}(\phi \times \theta ; \omega^{-1}) \] 
are as follows: 

\begin{enumerate}
\item 
The traces $\mbox{Trace}(I_{\theta}(\Pi \otimes \omega^{-1})(\phi))$ for a
cuspidal representation $\Pi$ of $GL(n, \A_E)$ such that
$^{\sigma}\Pi\simeq \Pi \otimes \omega$. The operator $I_{\theta}$
sends an automorphic form $f(g)\mapsto f(\sigma^{-1}(g)), ~g\in GL(n,
\A_E)$. The cusp forms occur with multiplicity one and $I_{\theta}$
is an intertwining operator sending $\Pi$ to $^{\sigma}\Pi$. 

\item Similar traces where $\Pi$ belongs to the discrete spectrum (and
  is not cuspidal) (see \cite{MWI}). This means that $n=ab$, and that there exists
  $\pi_a$, a cuspidal representation of $GL(a, \A_E)$ such that $\Pi$ is
  a quotient of the representation 
\[\rho=\pi_a|.|^{\frac{b-1}{2}}\boxplus \pi_a|.|^{\frac{b-3}{2}}\boxplus\cdots
\boxplus \pi_a|.|^{\frac{1-b}{2}},\]
where $|.|$ denotes the idele norm, seen as a character of  $GL(a,
\A_E)$ via the determinant; and the notation $\boxplus$ denotes, as
usual, parabolic induction, here from the parabolic subgroup of
$GL(n)$ of type $(a, \cdots, a)$. 

Now if $^{\sigma}\Pi\simeq \Pi \otimes \omega$, the same is true of
$\rho$. Since the representation $\pi_a$ is almost tempered, this
implies that $^{\sigma}\pi_a\simeq \pi_a \otimes \omega$. By induction
(since $a<n$), this is impossible. 

\item There are now the discrete terms defined by Arthur, which do not
  come from the discrete spectrum. We first consider the simplest
  case. Here $\Pi=\pi_1\boxplus \cdots \boxplus \pi_t, $ where $\pi_i$
  is a cuspidal representation of $GL(n_i, \A_E)$, and
 $ ~\sum_{i=1}^tn_i=n$. We assume then $^{\sigma}\Pi\simeq \Pi \otimes
  \omega$; of course
\[ ^{\sigma}\Pi= ^{\sigma}\pi_1\boxplus \cdots \boxplus
^{\sigma}\pi_t, \]
and this equivalence implies that there is an element $s\in W_M$, the
Weyl group corresponding to the Levi component $GL(n_1)\times \cdots
GL(n_t)$, such that 
\[ ^{\sigma}\pi_i\simeq \pi_{s(i)}\omega.\]
We must further assume that $s$ is `regular', i.e., ${\mathfrak
  a}_M^s={\mathfrak a}_G$, where ${\mathfrak a}_M $ (resp. ${\mathfrak
  a}_G$) denote the split component of the centers of $M$ and $G$
respectively. This implies that $M$ is homogeneous ($n=ab$) and that 
$^{\sigma}\pi_i\simeq \pi_{s(i)}\omega$, where $s$ is a cyclic
permutation of order $b$. The corresponding term is the trace of the product of $\Pi \omega^{-1}(\phi)$,  and of an intertwining operator associated to $s \times \sigma$, defined by Arthur, acting on the space of $\Pi$. Its precise form will be irrelevant.

\item Finally, we can build similar terms with $\Pi_i$ cuspidal
  replaced by a residual, discrete spectrum representations as in (2)
  above.  
\end{enumerate}

We note that all the representations of type (1,2) occur with multiplicity 1. Furthermore their Hecke eigenvalues are independent from those of the representations of type (3,4).

In the next paragraph, we compute the right-hand side, i.e., the endoscopic
terms.

For more information on the endoscopic stabilisation of the trace
formula, and in particular the use of formula \ref{tf2.1}, we refer to 
\cite{MWII}; in particular sections I.6.4 and X.5.9. Suffice it to say
here that if $\Pi$ is a representation of $GL(n, \A_E)$ occurring in
the left-hand side of (\ref{tf2.1}), i.e., in the discrete part of
Arthur's trace formula as reviewed above, there will be an endoscopic
group $G'$, and a representation $\pi'$ of $G'(\A_F)$ such that $\pi'$
and $\Pi$ are associated, i.e., the Hecke matrices of $\Pi$ are
deduced at almost all primes from those of $\pi'$ in a prescribed
manner, determined by the endoscopic datum, given in \cite[Section 6.4]{MWI}.
In our case, there will be a unique datum $\G'$ (or none at all) and
the relation between $\pi'$ and $\Pi$ will be quite explicit. 

\section{Endoscopic data} 

We now consider the right-hand side of Equation (\ref{tf2.1}). We must first
describe the endoscopic data. We use Waldspurger's formalism for base
change (\cite{W}, \cite{BC}). 

We consider $GL(n)/E$ as an $F$-group by restriction of scalars and
denote it by $G$. We will sometimes denote by $G_0$ the group $GL(n)$
over $E$. The generator $\sigma$ of $\Sigma={\rm Gal}(E/F)$ acts on
$G$ by $F$-automorphisms; as such we denote it by $\theta$. We fix an
isomorphism $\sigma\mapsto \iota(\sigma)$ between $\Sigma$ and
$\Z/d\Z$. For $w \in W_F, \iota(w)$ is then defined by composition. 

The connected component of identity of the dual group of $G$ is $\hat{G}=GL(n, \C)^d=\prod_{i\in
  \Z/d\Z}GL(n,\C)$; the $F$-structure on $G$ gives an action of ${\rm
  Gal}(\bar{F}/F)$ on $\hat{G}$ quotienting through $\Sigma$:
\begin{equation}\label{sigmaonhatG}
\sigma(g_1, \cdots, g_d)=(g_{1+\iota(\sigma)},
\cdots,g_{d+\iota(\sigma)}).
\end{equation}
Then ${^LG}=\hat{G}\rtimes W_F$, the action of $W_F$ being so obtained. On
the other hand, $\theta$ defines an automorphism $\hat{\theta}$ of
$\hat{G}$, 
\[ \hat{\theta}(g_1, \cdots, g_d)=(g_2, \cdots, g_d, g_1).\]
We are given a character $\omega$ of $\A_E^\times$, which defines via the
determinant an abelian character of $G(\A_F)=GL(n, \A_E)$. By a result
of Langlands \cite{L3} we can see $\omega$ as an element  $\bf{a}$ $ \in
H^1(W_F, Z(\hat{G}))$. Note that $Z(\hat{G})=(\C^\times)^d$, the action of
$W_F$ being given by equation (\ref{sigmaonhatG}). In general, the
element $\bf{a}$ is only defined modulo the group, 
\[ {\rm ker}^1(F, Z(\hat{G}))={\rm ker}\left( H^1(W_F, Z(\hat{G}))\to
  \bigoplus_v  H^1(W_{F_v}, Z(\hat{G}))\right),\]
where $v$ ranges over the places of $F$ (see \cite{KS}, \cite{W}). 

In our case, however, Shapiro's lemma implies that 
\[ H^1(W_F, Z(\hat{G}))=H^1(W_E, \C^\times),\]
with trivial action of $W_E$. Hence, 
\[ H^1(W_F, Z(\hat{G}))={\rm Hom}_{ct}(W_E, \C^\times)={\rm Hom}_{ct}(C_E,
\C^\times),\]
where $C_E$ is the group of idele classes. Similarly, for a place $v$
of $F$, 
\[ H^1(W_{F_v}, Z(\hat{G}))=\bigoplus_{w|v} H^1(W_{E_w}, \C^\times).\]
Thus, ${\rm ker}^1(F, Z(\hat{G}))$ is the group of idele class
characters that are locally trivial, so 
\[{\rm ker}^1(F, Z(\hat{G}))=\{1\}.\]
Now an endoscopic datum for $(G, \theta, \bf{a})$ is a triple, 
${\bf G}'=(G', {\mathcal G}',\tilde{s}=s\tilde{\theta})$ subject to the
following conditions:

\begin{description}

\item[(E1)] $G'$ is a quasisplit connected reductive group over $F$. 

\item[(E2)] $\tilde{s}=s\tilde{\theta}$ is a semisimple element in
  $\hat{G}\rtimes \Theta$, where $\Theta=<\hat{\theta}>\simeq
  \Z/d\Z$. 

\item[(E3)] ${\mathcal G}'\subset {^LG}$ is a closed subgroup. 

\item[(E4)] There exists a split exact sequence, 
\[ 1\to \hat{G}_{\tilde{s}}\to {\mathcal G}'\to W_F\to 1,\]
where $ \hat{G}_{\tilde{s}}$ is the connected component of the
centralizer of $\tilde{s}$ and ${\mathcal G}'\to W_F$ is induced by
the map ${^LG}\to W_F$. In particular, ${\mathcal G}'\cap
\hat{G}=\hat{G}_{\tilde{s}}$. 

\item[(E5)] For $(g,w)\in {\mathcal G}' $, 
\[ s\hat{\theta}(g)w(s)^{-1}=a(w)g,\]
where $a(w)$ is a $1$-cocycle of $W_F$ with values in $Z(\hat{G})$ and
defining $\bf{a}$. 

\end{description}

We note that any semisimple $\tilde{s}=s\hat{\theta}$ is conjugate to
an element $\tilde{s}$ such that $s=(s_0, 1,\cdots, 1)$. In this case,
$\hat{H}= \hat{G}_{\tilde{s}}=\hat{G}_{0, s_0}$ is diagonally embedded in
$\hat{G}$. Here  $\hat{G}_{0, s_0}$ is the centralizer of $s_0$ in
$\hat{G}_0$, which is connected. Thus, 
\[\begin{split}
\hat{G}_{\tilde{s}} & =\{(h, \cdots, h)\mid h \in \hat{H}\}\\
&:=\{{\rm diag}(h)\mid h \in \hat{H}\},
\end{split}
\]
where ${\rm diag}: GL(n,\C) \to GL(n, \C)^d$ is the diagonal map. We
look for 
\begin{equation}\label{xi}
 \xi: \cG'\to {^LG}
\end{equation}
where $\cG'$ admits an exact sequence $ (E4)$. Thus for $h\in
\hat{H}$, 
\[ \xi: (h, 1)\mapsto ({\rm diag}(h), 1),\]
while for $w\in W_F$, 
\[\xi:  (1,w)\mapsto (n(w), w)=(n(w), 1) (1,w).\]
Here we have chosen a splitting $n:W_F \to \cG'$ for $\cG'$. Let us
denote by $h\mapsto {^wh}$ the action of $W_F$ on $\hat{H}$ coming
from ${ (E4)}$. Then
\[ ({^wh}, 1)=(n, w)(h,1)(n, w)^{-1},\]
where we are writing for short  $n=n(w)$ and $(h, 1)=({\rm diag}(h),
1)$. Hence
\[\begin{split}
 ({^wh}, 1)& = (n,1)(1,w)(h,1)(1,w)^{-1}(n,1)^{-1}\\
&=(n,1)(h,1)(n,1)^{-1},
\end{split}
\]
since $h$, being diagonal in $\hat{G}$, is invariant by the action   
(\ref{sigmaonhatG}) of $W_F$. Write $ n=n(w)=(n_1,\cdots, n_d)$, so 
\begin{equation}\label{1}
 n \,{\rm diag}(h) n^{-1}=(n_1hn_1^{-1}, \cdots, n_dhn_d^{-1})={\rm
    diag}(h'),
\end{equation}
for some $h'\in \hat{H}$. 

We now assume that $s_0=(s_1, \cdots, s_d)$ is given by diagonal
scalar matrices $s_i$ of degree $b_i$ with distinct eigenvalues
$t_i$. Then
\[ \hat{H}=\prod_{i=1}^a GL(b_i)\subset GL(n).\]
Write $a=a_1+\cdots+a_r, ~(a_k\geq 1)$, with 
\[ b_1=b_2=\cdots =b_{a_1}< b_{a_1+1}=\cdots=b_{a_2}<\cdots\]
Since $n_i$ normalizes $\hat{H}$, 
\[ n_i\in \prod_{k=1}^r GL(b_{k})^{a_{k}}\rtimes \fS_{a_{k}}\]
with obvious notation.
We choose explicitly as representatives of the Weyl group
$\fS_{a_{k}}$ the obvious block matrices with blocks of size
$b_{k}$ equal to  either $0$ or $1$. 

Write $W=\prod_{k=1}^r\fS_{a_k}$, so that the normaliser of $\hat{H}$
is $\hat{H}W$. By Equation (\ref{1}), 
\[{\rm Ad}(n_i)h\equiv {\rm Ad}(n_j)h \quad \forall i, j.\]
Hence $n_i= h_i\tau$, where $\tau\in W$ is independent of $i$ and
$h_i\in \hat{H}$; morever
\[{\rm Ad}(h_i)h\equiv {\rm Ad}(h_j)h \quad \forall i, j.\]
Thus $h_i=z_{ij}h_j$ with $z_{ij}\in Z(\hat{H})=(\C^\times)^r$. Hence we
can write
\begin{equation}\label{2}
n(w)=(z_i(w)h(w)\tau(w))_i, \quad\mbox{where} \quad z_i(w)=(z_{i1}(w),
\cdots, z_{ir}(w))\in (\C^\times)^r.
\end{equation}
In the stabilisation of the trace formula, we are only interested in
the elliptic endoscopic data, i.e., those such that the neutral
component of $Z(\hat{H})^{W_F}$ and of  $Z(\hat{H})^{W_F,
  \hat{\theta}}$ coincide. The second group is equal to $\C^\times$
embedded diagonally in $GL(n, \C)^d$. We have
$Z(\hat{H})=\prod_{k=1}^r(\C^\times)^{a_k}$ and $n(w)$ acts by $\tau(w)\in
\prod_k\fS_{a_k}$. Thus  $Z(\hat{H})^{W_F}$ is the set of fixed points of
the $\tau(w), ~w \in W_F$. In particular, it contains the product
$\prod_k\C^\times$, embedded diagonally in $\prod_kGL(b_k)$. 

If $H$ is elliptic, we see that $r=1$, so $\hat{H}=GL(b)^k$ is
homogeneous. Furthermore, $W_F$ acts on $(\C^\times)^a$ via $\tau(w)\in
\fS_a$. The image of $W_F$ by $w\mapsto \tau(w)$ must therefore be a
transitive subgroup of $\fS_a$. 

So far we have shown that $\hat{H}=GL(b)^k$, and 
\begin{equation}\label{3}
n(w)=(n_i(w)), \quad\mbox{where} \quad n_i(w)=z_i(w)h(w)\tau(w)  
\end{equation}
with $z_i(w)\in Z(\hat{H})\simeq  (\C^\times)^r$.

The group $\cG'=\hat{H}\rtimes W_F$ is defined as a semi-direct
product, by the conjugation action of $n(w)$ on $\hat{H}$. Dually, 
$H\times_F \bar{F}\simeq GL(b)^a/\bar{F}$, where the rational
structure will be described presently. In particular, the derived
subgroup of $H$ is simply connected. This implies (see \cite[Section
2.2]{KS}), that $\cH'$ is an $L$-group, i.e., that for a suitable
choice of section the action of $W_F$ on $\hat{H}$ preserves a Borel
subgroup and a splitting. 

We have seen that $n(w)=z_i(w)h(w)\tau(w)$ acts by conjugation on
$\hat{H}$. If $h(w)=1$, this is easily seen to preserve a
splitting. Conversely, if $n(w)$ preserves a splitting, one checks
that the $h(w)\in \hat{H}$ must act trivially by conjugation, so we may
assume $h(w)\equiv 1$. With this section (if it is one), $\cH'\cong
{^LH}$ is naturally embedded in ${^LG}$, whence a homomorphism of
$L$-groups, 
\[ \xi_1: {^LH}\to {^LG}.\]
The contribution of this endoscopic datum will be deduced from
$\xi_1$. 

Since now $n(w)=(z_i(w)\tau(w))_i$, we must still check the cocycle
relation
\[n(ww')=n(w).wn(w'),\]
where the action of $w$ is given by the structure of ${^LG}$. If
$w\in W_F$ is sent to $\sigma^k\in \Sigma$, with $k=\iota(w)$, this
says that 
\[n_i(ww')=n_i(w)n_{i+k}(w').\]
Write $z_i(w)=(z_{i, \alpha}(w))$ according to the decomposition
$Z(\hat{H})=(\C^\times)^r$ for $\alpha=1,\cdots, a$. Thus
\[ \begin{split}
z_i(ww')\tau(ww') &=z_i(w)\tau(w)z_{i+k}(w)\tau(w')\\
\mbox{i.e.}\quad z_i(ww') &= z_i(w).\tau z_{i+k}(w')\tau^{-1},
\end{split}
\]
with $\tau=\tau(w)\in \fS_a$. Now
$\tau((z_{\alpha}))=z_{\tau^{-1}\alpha}$, so the cocycle relation
reads: 
\begin{equation}\label{4}
z_{i, \alpha}(ww')=z_{i, \alpha}(w)z_{i+k,\tau^{-1} \alpha}(ww'),
\end{equation}
where $k=\iota(w)$. 

\section{ endoscopy, with character}
We now have to introduce the character $\omega$ in the endoscopic
computations. This intervenes through formula ${(E 5)}$ in the
definition of endoscopic datum. We want to make the element 
$\bf{a}$ $ \in H^1(W_F, Z(\hat{G}))$, or rather a representative $a\in 
 Z^1(W_F, Z(\hat{G}))$, explicit. We write for $w\in W_F$: 
\[ a(w)=(a_i(w)), \quad a_i(w)\in \C^\times.\] Since $\omega$ is a character of $C_E$, it can be identified with an element of $H^1(W_E, \C^\times)$. We now need Shapiro's lemma.
For its explicit description, we follow Langlands
\cite{L1} (see also Serre  \cite{S}). Recall that 
\[W_E\backslash W_F \simeq {\rm Gal}(E/F)\simeq \Z/d\Z,\]
the isomorphism sending the generator $\sigma$ of $\Sigma$ to $1$. We
choose a representative $\sigma\in W_F$ of this generator, which we
also denote by $\sigma$. Now $\{\sigma, \sigma^2,\cdots, \sigma^d\}$ are
representatives of $ W_E\backslash W_F$. Note that $\sigma^d\neq 1$ as follows from class field theory, cf. (7.5), (7.6) below.

For any $w\in W_F$, 
\[\sigma^i(w)= \delta_i(w)\sigma^j,\]
where $j\equiv i+k ({\rm mod}  d) \quad \mbox{if} \quad k=\iota(w)$ 
(see Equation (\ref{sigmaonhatG})) and $\delta_i(w)\in W_E$. We set 
$a_i(w)=\omega(\delta_i(w))$. For $w\in W_E$,
$\sigma^iw=\delta_i(w)\sigma^i$, so 
\begin{equation}\label{5}
a_i(w)=(\omega(\sigma^iw\sigma^{-i}))=\omega({^{\sigma^i}w}):=\omega_i(w), 
\end{equation}
since the lifting $\sigma$ acts by conjugation, on the abelianized
Weil group $C_E$ of $E$, through its image in $Gal(E/F)$. 

Consider our chosen lift $\sigma\in W_F$. Then 
\begin{equation}\label{6}
\sigma^i\sigma=\sigma^{i+1}=\delta_i(\sigma)\sigma^{[i+1]},
\end{equation}
where $[i+1]$ is the representative of $i+1$ in $\{1,\cdots, d\}$. The
foregoing equation implies
\begin{equation}\label{7}
\delta_i(\sigma) =1\quad\mbox{for}\quad (i=1,\cdots, d-1), \quad
\delta_d(\sigma)=\sigma^d\in W_E.
\end{equation}
This defines completely $a(w)$. Now, 
\[ \phi: W_F\to Z(\hat{G})\rtimes W_F, \quad w\mapsto (a_i(w), w),\]
defines an $L$-parameter for the $L$-group of ${\rm Res}_{E/F}GL(1)$,
corresponding to the character $\omega$. 

We now have to introduce the condition
\begin{equation}\label{8}
\omega|_{\A_F^\times}=\varepsilon_{E/F}.
\end{equation}
In cohomological terms, this is given by the corestriction, 
\[ {\rm Cor} : H^1(W_E, \C^\times)\to H^1(W_F, \C^\times),\]
dual to the transfer map $W_F/W_F^{\rm der}\to W_E/W_E^{\rm der}$. 
Explicitly, this is given as in (\cite[Chapter VII, Section 8]{S}) by
\[ w\mapsto \prod_i\delta_i(w).\]
Our condition is therefore, for $w\in W_F$:
\begin{equation}\label{9}
\prod_ia_i(w)=\prod_i\omega(\delta_i(w))=\varepsilon_{E/F}(w),
\end{equation}
where both sides are seen as characters of the Weil groups (recall
that $\varepsilon_{E/F}$ is here seen as a character of the Galois
group). For $w\in W_E$, $\varepsilon_{E/F}(w)=1$ and the left hand side
is 
\[ \prod_i{^{\sigma^i}\!\omega}(w)=\omega(N_{E/F}z),\]
where $z\in C_E$ is the image of $w$. Since $\omega$ restricts trivially to
$N_{E/F}(C_E)\subset C_F$, the relation is satisfied. 

On the other hand, $(\delta_i(\sigma))=(1,\cdots, 1, \sigma^d)$, thus 
equation (\ref{9}) is equivalent to
\begin{equation}\label{10}
a(\sigma)=(1, \cdots, \zeta),
\end{equation}
where $\zeta=\varepsilon_{E/F}(\sigma)$ is a primitive root of unity of
order $d$. (In particular, $\omega (\sigma^d)= \zeta$ when $\sigma^d$ is seen as an element of $W_{E/F}$, hence of $C_E$...) 

Now consider the condition $({E5})$ on the endoscopic group:
\begin{equation}\label{11}
s\hat{\theta}(g)w(s)^{-1}=a(w)g, \quad (g, w)\in \cG'.
\end{equation}
For $w=1$, this is the condition defining $\hat{H}=\hat{G}'$. Consider
the image $(n(w), w)$ of the section $n$ for $w\in W_F$. We have
\[ n(w)=(z_i(w)\tau(w))_i.\]
Write for simplicity $\tau, z_i, a_i$ for $\tau(w), z_i(w), a_i(w)$
respectively. If $k=\iota(w)$, 
\[ w(s)=w(s_0, 1, \cdots, 1)=(1, \cdots, s_0, \cdots, 1),\]
where $s_0$ occurs at the place $l=d+1-k$ with the convention that
$l=1$ if $k=0$. The equation (\ref{11}) reads, 
\begin{equation}\label{12}
(s_0, 1,\cdots,1)(z_2\tau, \cdots, z_1\tau)(1,\cdots, 1,
s_0^{-1},1)=(a_i)(z_i\tau).
 \end{equation}
Thus, 
\begin{equation}\label{13}
s_0z_2\tau=a_1z_1\tau,\quad z_3\tau=a_2z_2\tau, \quad \cdots 
\quad z_{l+1}\tau s_0^{-1} =a_lz_l\tau, \quad \cdots
\quad z_1\tau=a_dz_d\tau.
\end{equation}
(If $k=0, ~s_0z_2\tau s_0^{-1}=a_1z_1\tau$ etc.; if $k=1, ~l=d$ and
the last equation is $z_1\tau s_0^{-1}=a_dz_d\tau$.) 
Write ${^{\tau}\!s_o^{-1}}=\tau s_0^{-1} \tau^{-1}$. Then equation
(\ref{13}) is equivalent to, 
\begin{equation}\label{14}
\begin{split}
s_0z_2& =a_1z_1\\
z_3&=a_2z_2\\
&.\\
&.\\
z_{l+1}{^{\tau}\! s_0^{-1}}& =a_lz_l\\
&.\\
&.\\
z_1&=a_dz_d.
\end{split}
\end{equation}
Note that all these elements are contained in $Z(\hat{H})$, hence
commute. (For $k=0$, the first line is
$s_0z_0\,{^{\tau}\!s_0^{-1}}=a_1z_1$; for $k=1$, the last line is
$z_1\,{^{\tau}\!s_0^{-1}}=a_dz_d)$. Taking the product, we see that 
\[ s_0\,{^{\tau}\!s_0^{-1}}=\prod a_i=\varepsilon_{E/F}(w),\]
by equation (\ref{9}), so
$s_0=\varepsilon_{E/F}(w)\,{^{\tau}\!s_0}$. 

Now $s_0\in Z(\hat{H})=(\C^\times)^a$, and $\tau\in \fS_a$. Write
$s_0=(s_{0, \alpha}), ~\alpha=1, \cdots, a$. Thus $^{\tau}\!s_0=
(s_{0, \tau^{-1}\alpha})$, whence
\begin{equation}\label{15}
s_{0, \tau\alpha}=\varepsilon_{E/F}(w)s_{0, \alpha}.
\end{equation}
Assume $w\in W_F$ is sent to the chosen generator $\sigma\in
\Sigma$, so $\varepsilon_{E/F}(w)=\zeta$. Recall that $s_0\in
Z(\hat{H})$ is given by block-diagonal matrices $s_{0, \alpha}$ of the
size $b$ with distinct eigenvalues. Equation (\ref{15}) now implies
that the  $s_{0, \alpha}$ can be partitioned into $a'=a/d$ subsets of
the form
\begin{equation}\label{16}
(s_1, \zeta s_1, \cdots, \zeta^{d-1}s_1; s_2,\zeta s_2, \cdots,
\zeta^{d-1}s_2; \cdots );
\end{equation}
 the entries being block-diagonal, we assimilate them to scalars. In
 particular $d\,|\,a$, so  $d\,|\,n$. The scalars $s_j ~(j=1, \cdots,
 a')$ verify $s_j\neq \mu s_{j'}$ for any $\mu\in {\mu}_d(\C)$.  
Equation (\ref{15}) now uniquely determines $\tau_1=\tau(\sigma)$: it
is a product of $a'$ $d$-cycles. 

Consider now an arbitrary element $w\in W_F$. If $w\in W_E$,
$\varepsilon_{E/F}(w)=1$ and equation (\ref{15}) implies that
$\tau=1$, the eigenvalues being distinct. Thus $W_F$ acts via
$\Sigma=W_F/W_E$ and $\tau(w)=\tau_1^{\iota(w)}$. The image of $W_F$
is therefore a cyclic subgroup of $\fS_a$, of order $d$, preserving
the strings of length $d$ in equation (\ref{16}). The
ellipticity of $H$ now implies that this action is transitive, so
$a=d$. 

Since we have determined $n(w)$ up to central elements in $\hat{H}$,
we have now computed the $F$-group $H$. Indeed, $\hat{H}=GL(b)\times
\cdots \times GL(b)$ ($a=d$ factors) and $W_F$ acts via $\Sigma$,
cyclically permuting the factors. This implies that $H$ is isomorphic
to ${\rm Res}_{E/F}(GL(b)/E)$. This embedding into $\hat{G}$ is given
on $\hat{H}$ by 
\[ (h_1, \cdots, h_d)\mapsto {\rm diag}(h_1\oplus \cdots \oplus
h_d)\in GL(n, \C)^d.\]
We can now summarise the main result of this section: 
\begin{proposition}\label{prop:endoscopy}
\begin{enumerate}
\item If $d~{\not\,\mid}~ n$, there exists no elliptic endoscopic group for
  $(\theta, \omega)$. 

\item If $n=db$, there exists (at least) one endoscopic datum for
  $(\theta, \omega)$ given by the foregoing construction. 

\end{enumerate}
\end{proposition}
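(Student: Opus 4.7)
The plan is that both parts of the proposition are essentially read off from the detailed computation already carried out in the preceding pages; the proof just has to organize those conclusions and, for part (2), verify that the construction given is consistent.

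For part (1), I would argue as follows. Suppose $\mathbf{G}'=(G',\mathcal{G}',\tilde s)$ is an elliptic endoscopic datum for $(G,\theta,\mathbf{a})$. Up to conjugation we may take $\tilde s = s\hat\theta$ with $s=(s_0,1,\dots,1)$, and the centralizer analysis shows $\hat H := \hat G_{\tilde s} = \prod_{k=1}^r GL(b_k)^{a_k}$ diagonally embedded; ellipticity forces $r=1$ and $\hat H = GL(b)^a$ with $a=n/b$. The crucial equation (\ref{15}), obtained from condition (E5) applied to a lift $w\in W_F$ of the generator $\sigma\in\Sigma$ where $\varepsilon_{E/F}(w)=\zeta$ is a primitive $d$-th root of unity, forces the diagonal scalars $s_{0,\alpha}$ to partition into orbits of length $d$ under multiplication by $\zeta$. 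In particular $d\mid a$, and since $n=ab$ we get $d\mid n$. This is the content of (1).

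For part (2), with $n=db$ I would give an explicit datum. Take $H=\operatorname{Res}_{E/F}(GL(b)/E)$, so $\hat H=GL(b,\C)^d$ with $W_F$ acting through $\Sigma$ by cyclically permuting the factors. Embed $\hat H$ into $\hat G=GL(n,\C)^d$ diagonally via $(h_1,\dots,h_d)\mapsto \operatorname{diag}(h_1\oplus\cdots\oplus h_d)$. Choose $s_0\in Z(\hat H)=(\C^\times)^d$ with entries $(s,\zeta s,\dots,\zeta^{d-1}s)$ for some scalar $s$ (so $a'=1$), and $\tau_1=\tau(\sigma)$ the $d$-cycle dictated by (\ref{15}). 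Extend to a section $n: W_F\to \mathcal G'$ by $n(w)=(z_i(w)\tau(w))_i$ with $\tau(w)=\tau_1^{\iota(w)}$ and $z_i(w)$ chosen to satisfy the relations (\ref{14}); set $\mathcal G'=\hat H\rtimes W_F$ via conjugation by $n(w)$. I would then verify in turn: (E1) $H$ is quasi-split reductive over $F$; (E2) $\tilde s$ is semisimple; (E3), (E4) $\mathcal G'$ sits in ${}^L G$ with the asserted exact sequence by construction; (E5) holds because (\ref{15}) is built into the choice of $s_0$ and $\tau_1$, and the remaining $z_i(w)$ relations (\ref{14}) are consistent since $\prod_i a_i(w)=\varepsilon_{E/F}(w)$ by (\ref{9}).

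The main obstacle is showing that the section $n$ can be chosen to satisfy the cocycle relation (\ref{4}) consistently, so that $\mathcal G'$ is actually a group and so that, after twisting by elements of $Z(\hat H)$, one can arrange $h(w)\equiv 1$ and obtain an honest $L$-homomorphism ${}^L H \to {}^L G$. Concretely, one has to choose the scalars $z_{i,\alpha}(w)\in \C^\times$ realizing (\ref{14}) and check that they assemble into a cocycle on $W_F$ modulo coboundaries from $Z(\hat H)$; this reduces to a computation in $H^1(W_F,Z(\hat H))$ using Shapiro's lemma, and the compatibility $\prod_i a_i=\varepsilon_{E/F}$ (already verified in (\ref{9})) is exactly what is needed. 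Once that cocycle is in hand, the endoscopic datum is well defined and (E1)--(E5) follow from the preceding computation, completing the proof.
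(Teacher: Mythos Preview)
Your proposal is correct in outline and follows the same approach as the paper: part (1) is indeed read off from (\ref{15})--(\ref{16}), and part (2) is the explicit construction with $H=\operatorname{Res}_{E/F}(GL(b)/E)$, $s_0=(1,\zeta,\dots,\zeta^{d-1})$, and a section $n(w)=(z_i(w)\tau(w))_i$.

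Where your sketch is too optimistic is the verification for part (2). You write that the $z_i(w)$ relations (\ref{14}) ``are consistent since $\prod_i a_i(w)=\varepsilon_{E/F}(w)$'' and that the rest ``reduces to a computation in $H^1(W_F,Z(\hat H))$ using Shapiro's lemma.'' The paper does exactly this, but the computation is not a formality: Shapiro parametrizes the cocycles $(z_{i,\alpha})$ by characters $\eta_\beta$ of $C_E$ via $z_{i,\alpha}(w)=\eta_{i+\alpha}(\delta_i(w))$, and imposing (\ref{14}) for $w\in W_E$ forces the recursion $\eta_{\alpha+1}\circ\sigma=\omega\,\eta_\alpha$. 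With such $\eta_\alpha$ in hand, one then checks (\ref{14}) at $w=\sigma$ and finds it \emph{fails} (equations (\ref{21})--(\ref{22})). The paper must then exhibit an explicit coboundary $(u_i^\beta)$, subject to the constraint $\prod_{i}u_i^\beta=1$, that corrects the section so that (\ref{14}) holds for all of $W_F$; this is the content of (\ref{23})--(\ref{24}) and the paragraph following. The compatibility $\prod_i a_i=\varepsilon_{E/F}$ guarantees the system (\ref{14}) is solvable pointwise in $w$, but it does not by itself produce a global cocycle, and that is where the work lies. Your plan would go through once you incorporate this coboundary-correction step.
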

To complete the proof of the proposition, we still have to show that
we can choose the $z_i(w)$ so as to satisfy equations (E5) and
(\ref{12}). We will obtain in fact a more precise result.

The permutation $\tau$ associated to $w\in W_F$ is a cyclic
permutation on the indices $\alpha$. We have
$\tau(z_{\alpha})\tau^{-1}=(z_{\tau^{-1}\alpha})$. We now assume that
for $\iota(w)=k$, $\tau^{-1}(\alpha)=\alpha-k$. The relation (\ref{4})
now reads
\begin{equation}\label{17}
z_{i,\alpha}(ww')=z_{i,\alpha}(w)z_{i+k,\alpha-k}(w').
\end{equation}
We consider all indices as elements of $\Z/d\Z$. Now fix $\beta$ (mod
$d$). Now equation (\ref{17}) yields for $i+\alpha=\beta$:
\[z_{i,\beta-i}(ww')=z_{i,\beta-i}(w)z_{i+k,\beta-i-k}(w').\]
Set $\zeta_i^{\beta}(w)=z_{i,\beta-i}(w)$: we now have
\[ \zeta_i^{\beta}(ww')=\zeta_i^{\beta}(w)\zeta_{i+k}^{\beta}(w').\]
This means that $\zeta^{\beta}=(\zeta_i^{\beta})_i$ is a $1$-cocycle
of $W_F$ for the action of $W_F$ on $(\C^\times)^d$. However $H^1(W_F,
(\C^\times)^d)$, for  $W_F$ acting by its action on the dual group of
$\mbox{Res}_{E/F}(GL(1))$ is equal to $H^1(W_E, \C^\times)=\mbox{Hom}(C_E,
\C^\times)$ by Shapiro's lemma. Thus we see that each character
$\eta_{\beta}$ of $C_E$ defines such a cocycle, by
$\zeta_i^{\beta}(w)=\eta_{\beta}(\delta_i(w))$. We can then set
\begin{equation}\label{18}
z_{i,\alpha}(w)=\eta_{i+\alpha}(\delta_i(w)),
\end{equation}
and we see that a section (and therefore a subgroup) is defined by the
choice of $\eta_{\beta}$. 

We still have to fulfill the condition given by equation (\ref{14}). 
Assume first that $w\in W_E$. Then $\tau=1$, and the condition is
simply
\[z_{i+1}(w)=a_i(w)z_i(w).\]
Now $z_i=(z_{i,\alpha})$ with
$(z_{i,\alpha})=(\eta_{i+\alpha}(w))_i=(\eta_{i+\alpha}(\sigma^iw))_i$. Here
$\sigma^i$ is the automorphism of $C_E$ obtained by quotienting the
automorphism $w\mapsto \sigma^i w\sigma^{-i}$ of $W_E$. So the
condition is, 
\[
\eta_{i+1+\alpha}(\sigma^{i+1}w)=\omega(\sigma^iw)\eta_{i+\alpha}(^{\sigma^i}\!w).\]
($w$ being seen as an element of $C_E\cdots$), i.e.,
\begin{equation}\label{19}
\eta_{\alpha+1}\circ \sigma=\omega\eta_{\alpha}, \quad
\alpha=1,\cdots, d.
\end{equation}
We can now write the condition of equation (\ref{14}) for
$w=\sigma$. Recall from equation (\ref{7}) that
$(\delta_i(\sigma))=(1,\cdots, 1,\sigma^d)$ with $\sigma^d\in W_E$. We
still have $z_{i, \alpha}(w)=\eta_{i+\alpha}(\delta_i(w))$, whence
\[\begin{split}
z_{i, \alpha}(\sigma)&=1 \quad 1\leq i\leq d-1\\
z_{d, \alpha}(\sigma)&=\eta_{\alpha}(\sigma^d).
\end{split}
\]
Morever, as before
\[ \eta_{\alpha+1}\circ
\sigma(\sigma^d)=\omega(\sigma^d)\eta_{\alpha}(\sigma^d),\]
and $\sigma\sigma^d\sigma^{-1}=\sigma^d, ~\omega(\sigma^d)=\zeta$,
whence 
\begin{equation}\label{20}
z_{d,\alpha} (\sigma)=(\eta, \zeta\eta, \cdots, \zeta^{d-1}\eta)
\end{equation}
where $\eta=\eta_1(\sigma^d)$. Write $z_i = (z_{i,\alpha})= (z_{i,\alpha}(\sigma))$.Then equation (\ref{14}) for $\sigma$
reads:
\begin{equation}\label{21}
\begin{split}
s_0z_2&=z_1\\
z_3&=z_2\\
&.\\
&.\\
z_d&=z_{d-1}\\
^{\tau}\!s_0^{-1}z_1& =\zeta z_d
\end{split}
\end{equation}
and is obviously not satisfied. Recall that for $k=k(\sigma)=1$ we
have set $(^{\tau}\!s_0)_{\alpha}=s_{0, \tau^{-1}\alpha}=s_{0,
  \alpha-1}$ and, cf equation (\ref{15}), 
$s_{0, \tau\alpha}=s_{0, \alpha+1}=\zeta s_{0,\alpha}$. We can choose 
\[s_0=(1,\zeta, \cdots, \zeta^{d-1}). \]
Substituting $s_0~^{\tau}\!s_0^{-1}=\zeta$ in equation (\ref{21}), we
get
\begin{equation}\label{22}
\begin{split}
s_0z_2&=z_1\\
z_3&=z_2\\
&.\\
&.\\
z_d&=z_{d-1}\\
s_0^{-1}z_1& = z_d.
\end{split}
\end{equation}
Now we can replace the $z_i$ by cohomologous elements for the action
of $W_F$ (via $\Sigma$) giving the cocycle relation (\ref{4}), i.e., 
\[ w(z_{i\alpha})=z_{i+k, \alpha-k}.\]
A coboundary is given for $w\in W_F, ~k=\iota(w)$ by
\[ \zeta_{i\alpha}(w)=v_{i\alpha}v_{i+k,\alpha-k}^{-1}.\] 
In particular for $w=\sigma$:
\[ \zeta_{i\alpha}(\sigma)=v_{i\alpha}v_{i+1,\alpha-1}^{-1}.\]
As we did for $w\in W_E$ we can write $\zeta_{i,\alpha}=\zeta_i^{\beta}$ with
$\beta=i+\alpha, ~
\zeta_i^{\beta}=v_{i}^{\beta}(v_{i+1}^{\beta})^{-1}:=u_i^{\beta}$. The
$u_i^{\beta}$ must then satisfy the condition, 
\begin{equation}\label{23}
\prod_{i+\alpha=\beta} u_i^{\beta}=1.
\end{equation}
Our equations then become, with $\alpha=1,\cdots, d$: 
\begin{equation}\label{24}
\begin{split}
s_{0, \alpha} u_2^{2+\alpha}&= u_1^{1+\alpha}\\
u_3^{3+\alpha}&= u_2^{2+\alpha}\\
&.\\
&.\\
u_{d-1}^{d-1+\alpha}&=u_{d-2}^{d-2+\alpha}\\
z_{d,\alpha}u_d^{\alpha}&=u_{d-1}^{d-1+\alpha}\\
s_{0,\alpha}^{-1}u_1^{1+\alpha}&=z_{d,\alpha}u_d^{\alpha}.
\end{split}
\end{equation}
Write $u= u_2 =(u^{\alpha})$. Then d-3 lines
yield: 
\[ u_{i+1}^{1+\alpha}=u_i^{\alpha}, \quad i=2, \cdots, d-2,\]
\begin{equation}\label{25}
\mbox{so} \quad u_i^{\alpha}=u^{\alpha-2+i}\quad  i=2, \cdots, d-1.
\end{equation}
Thus
\[\begin{split} 
s_{0,\alpha}u^{2+\alpha}&=u_1^{1+\alpha}\\
z_{d,\alpha}u_d^{\alpha}&=u^{\alpha+2}\\
s_{0,\alpha}^{-1}u_1^{1+\alpha}&=z_{d,\alpha}u_d^{\alpha}, 
\end{split}
\]
equations obviously compatible. However we must choose the
$u_i^{\beta}$ verifying equation (\ref{23}). We determine $u_1$ and
$u_d$ by the first equations. Thus,
\[u_1^{\alpha}=s_{0,\alpha-1}u^{\alpha+1}, \quad 
u_d^{\alpha}=z_{d,\alpha}^{-1}u^{\alpha+2}.\]
The product is then
\[\begin{split}
\prod_{i=1}^{d-1}u_i^{\beta-i}&=s_{0,\beta-2}z_{d,\beta}^{-1}u^{\beta}
\prod_{i=2}^{d-1}u^{\beta+2-2-i}u^{\beta+2}\\
&=s_{0,\beta-2}z_{d,\beta}^{-1}\prod_j u^  {\beta+j},
\end{split}
\]
where the product is taken over $2\Z/d\Z\subset \Z/d\Z$. However,
\[
s_{0,\beta-2}z_{d,\beta}^{-1}=\zeta^{\beta-3}\zeta^{1-\beta}\eta=\zeta^{-2}\eta\]
is constant, cf. (\ref{20}). It suffices therefore to impose on $u=u_2$
the two conditions (at most)
\[\prod_{j\equiv \beta}u^{\beta+j}=\zeta^2\eta^{-1}\]
for $\beta \in \Z/d\Z$, the congruence being modulo $2\Z/d\Z$. 

We can now define the new cocycle in $Z^1(W_F, Z(\hat{H}))$ by
multiplying the previous map by the coboundary just obtained. It
defines a new section, which verifies the defining condition
(\ref{14}), obviously for $w\in W_E$, for all powers of $\sigma$, and
therefore for $w\in W_F$. This proves the second part of Proposition 
\ref{prop:endoscopy}, and morever it exhibits an explicit section. We
have morever \footnote{ We could avoid this verification by using Lemma 4.5 of Borel\cite{Bo}. It will be clearer explicitly to exhibit the conjugation. }:
\begin{lemma}\label{lemma4.4}
The new section is conjugate in $\hat{G}$ to the section given by the
$z_{i, \alpha}$. In particular they define (up to conjugation in
$\hat{G}$) the same embedding  $^L\!H\to ^L\!G$. 
\end{lemma}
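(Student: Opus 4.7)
The plan is to realise the passage from the original section (defined by the $z_{i,\alpha}$ and which satisfies only the cocycle relation) to the corrected section (which additionally satisfies the endoscopy condition (E5)) as conjugation by an explicit element $v\in\hat G$ lying in the centraliser $Z_{\hat G}(\hat H)$. The general principle being used is that modifying a cocycle into ${}^LG$ by a coboundary $\partial v$ with $v\in\hat G$ is precisely conjugation of the associated section by $(v,1)\in\hat G\rtimes W_F$; hence if the coboundary used to correct the $z_{i,\alpha}$'s can be realised as such a $\partial v$, the lemma follows, and the embedding of $\hat H$ is preserved because $v$ centralises $\hat H$.

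Concretely, I would take $v=(v^i)_{i=1}^d\in\hat G=GL(n,\C)^d$, where each $v^i\in Z(\hat H)=(\C^\times)^a$ is the diagonal matrix whose $\alpha$-component is the scalar $v_{i,\alpha}$ appearing in the coboundary $\zeta_{i,\alpha}(w)=v_{i,\alpha}v_{i+k,\alpha-k}^{-1}$. Since $\hat H$ is embedded diagonally in $\hat G$ and each $v^i$ lies in $Z(\hat H)$, we have $v\in Z_{\hat G}(\hat H)$, so conjugation by $(v,1)$ fixes the (diagonal) embedding of $\hat H$. Using the semidirect-product multiplication, this conjugation sends $(n(w),w)$ to $(v\,n(w)\,w(v^{-1}),\,w)$, and expanding componentwise with $k=\iota(w)$ gives an $i$-th entry
\[
v^i\,z_i(w)\tau(w)(v^{i+k})^{-1}\;=\;z_i(w)\tau(w)\cdot\bigl(\tau(w)^{-1}v^i\tau(w)\bigr)(v^{i+k})^{-1},
\]
since $v^i$ is diagonal and commutes with $z_i(w)$. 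The last two factors lie in $Z(\hat H)$ and, evaluated at the index $\alpha$, produce the scalar $v^i_{\tau\alpha}(v^{i+k}_\alpha)^{-1}=v_{i,\alpha+k}v_{i+k,\alpha}^{-1}$; up to the translation of $\alpha$ intrinsic to the definition of the coboundary, this reproduces the correction passing from the old cocycle to the new one.

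The main obstacle is the bookkeeping of indices: the twisted $W_F$-action used to define the coboundary of the $z_{i,\alpha}$'s incorporates both the permutation of the $d$ factors of $\hat G$ and the conjugation by $\tau(w)$ on the $\alpha$-index, whereas the natural $W_F$-action on $\hat G$ only sees the factor permutation. Reconciling them is achieved precisely because conjugation by $\tau(w)\in GL(n,\C)$ on the diagonal element $v^i$ supplies the missing $\alpha$-shift. Once this alignment is correctly set up, the cocycle property reduces the verification to two generating cases: $w\in W_E$ (where $\tau(w)=1$ and the identity is immediate) and $w=\sigma$ (where $\tau(\sigma)=\tau_1$ is the explicit product of $a'$ disjoint $d$-cycles constructed above). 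Both are direct checks parallel to those already performed in deriving the coboundary correction, and together they exhibit the conjugating element in $\hat G$.
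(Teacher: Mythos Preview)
Your approach is essentially the paper's: conjugate by the element $v=(v^i)_i\in Z(\hat H)^d\subset\hat G$, noting that $v$ centralises $\mathrm{diag}(\hat H)$ so the embedding of $\hat H$ is unchanged. One small cleanup: instead of leaving the correction on the right of $\tau(w)$ and appealing to a ``translation of $\alpha$'', move it to the left via $\tau X=({}^{\tau}\!X)\tau$ to get $z_i(w)\cdot v^i\,{}^{\tau}\!(v^{i+k})^{-1}\cdot\tau$, whose $\alpha$-component is exactly $v_{i,\alpha}v_{i+k,\alpha-k}^{-1}=\zeta_{i,\alpha}(w)$; this holds for all $w\in W_F$ at once, so the separate checks on $W_E$ and $\sigma$ in your last paragraph are unnecessary.
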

This is clear: if we define by $\bs_1$ the previous section and $\bs_2$
the new one, we have by construction
\[ \bs_1(w)=z^1(w)\tau, \quad  \bs_2(w)=z^2(w)\tau,\]
with $\tau=\tau(w)$, and, with $k=\iota(w)$:
\[z^2(w)=z^1(w)z(\tau z\tau^{-1})^{-1}_{i+k},\]
with $z=(z_i)\in Z(\hat{H})^d$. Thus, 
\[ \bs_2(w)=z^1(w)z \tau (z_{i+k})^{-1}\]
since $\tau$ is diagonal, so
\[ \bs_2(w)=z (z^1(w)\tau)w(z)^{-1}.\]

Recall that $\xi_1$ was defined by 
\begin{equation}\label{25}
\begin{split}
\xi_1 : h &\mapsto \mbox{diag} (h) \quad (h\in \hat{H}\\
w&\mapsto (s_1(w),w)=(z^1(w)\tau, w)\quad (w\in W),
\end{split}
\end{equation}
so $(h, w)\mapsto (hz^1(w)\tau, w)$. Similarly we define $\xi_2$ by 
\[ \begin{split}
\xi_2 : h &\mapsto \mbox{diag} (h) \\
w&\mapsto z(z^1(w)\tau, w)z^{-1}.
\end{split}
\]
Since $z\in Z(\hat{H})^d$, it centralizes $\mbox{diag}(h)$, so $\xi_2$
is conjugate to $\xi_1$. In particular, they will have the same effect
on functions of Hecke matrices, the data for endoscopy. 

Recall that two endoscopic data, 
\[ \G_1'=(G_1', \cG_1',\tilde{s}_1) \quad \mbox{and}\quad 
\G_2'=(G_2', \cG_2',\tilde{s}_2)\]
are equivalent if there exists $g\in \hat{G}$ such that 
\[ g\cG_1'g^{-1}=\cG_2', \quad g\tilde{s}_1'g^{-1}=z\tilde{s}_2\]
for an element $z\in Z(\hat{G})$. (Recall that $\tilde{s}_i=(s_i,
\theta)$ with $s_i\in \hat{G})$. 

\begin{proposition}[Waldspurger]\label{waldspurger}
Assume that $d\,|\,n$. Then there exists only one equivalence class of
endoscopic data for $(G, \theta, a)$. 
\end{proposition}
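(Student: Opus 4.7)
The plan is to show that any elliptic endoscopic datum for $(G, \theta, \mathbf{a})$ must satisfy all the constraints derived in the preceding analysis, and that the residual ambiguity in its description is absorbed by the equivalence relation on endoscopic data.

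First I would fix an arbitrary elliptic endoscopic datum $\G' = (G', \cG', \tilde{s})$. By the standard reduction for semisimple elements in $\hat{G} \rtimes \langle \hat{\theta}\rangle$, I may conjugate by $\hat{G}$ to put $\tilde{s}$ in the form $s\hat{\theta}$ with $s = (s_0, 1, \ldots, 1)$ and $s_0 \in \hat{G}_0 = GL(n,\C)$ semisimple. This identifies $\hat{H} = \hat{G}_{\tilde{s}} = \mathrm{diag}(\hat{G}_{0, s_0})$. Now invoke the analysis culminating in Equation (\ref{16}): the ellipticity of $H$ combined with the cyclic action of $W_F$ via $\Sigma$ forces $\hat{H} = GL(b)^d$ with $b = n/d$, the $d$ factors being cyclically permuted. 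Hence $H \cong \mathrm{Res}_{E/F}(GL(b)/E)$ is determined up to $F$-isomorphism, and the $F$-group $G'$ is pinned down.

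Next I would show $\tilde{s}$ is determined up to $Z(\hat{G})$. Equation (\ref{15}) with $\varepsilon_{E/F}(\sigma) = \zeta$ a primitive $d$-th root of unity and $\tau = \tau(\sigma)$ a single $d$-cycle (the elliptic case) forces the blocks of $s_0$ to satisfy $s_{0, \alpha+1} = \zeta\, s_{0, \alpha}$. Thus $s_0 = c \cdot (1, \zeta, \ldots, \zeta^{d-1})$ for some scalar $c \in \C^\times$. The scalar $c$ corresponds precisely to a central element of $\hat{G}_0$ sitting diagonally in $\hat{G}$, so it can be absorbed into the factor $z \in Z(\hat{G})$ in the definition of equivalence.

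Third, I would address the embedding $\xi \colon \cG' \hookrightarrow {}^L G$. The analysis above showed that any section $n \colon W_F \to \cG'$ has the form $n(w) = (z_i(w) \tau(w))_i$ with $z_i(w) \in Z(\hat{H})$, and that (via Shapiro's lemma applied to $Z(\hat{H}) = (\C^\times)^d$ with the cyclic $W_F$-action) the $z_i(w)$ are parametrized by $d$ characters $\eta_\beta$ of $C_E$, subject to the relation (\ref{19}) and to the normalisation at $\sigma$ coming from Equation (\ref{20}). Different admissible choices of the $\eta_\beta$ modify the cocycle by an element of $Z^1(W_F, Z(\hat{H}))$ arising from a coboundary with values in $Z(\hat{H})^d$. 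By Lemma \ref{lemma4.4} and its explicit verification, any two such sections are conjugate in $\hat{G}$ by an element of $Z(\hat{H})^d$, hence define $\hat{G}$-conjugate subgroups $\cG' \subset {}^L G$ and $\hat{G}$-conjugate elements $\tilde{s}$ modulo $Z(\hat{G})$. Consequently, the resulting data are equivalent in the sense defined before the proposition.

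The main obstacle is to confirm that the freedom in the cocycle section is exactly absorbed by the equivalence relation; this is where Lemma \ref{lemma4.4} does the essential work, providing the explicit conjugating element in $Z(\hat{H})^d$ that interchanges the two sections. Once this is in hand, the three types of ambiguity ($\hat{G}$-conjugation of $\tilde{s}$, scaling of $s_0$ by $Z(\hat{G})$, and coboundary changes in the section) together exhaust the freedom in the construction, proving uniqueness up to equivalence.
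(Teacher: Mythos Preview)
Your overall structure matches the paper's: first pin down $s_0$ up to a central scalar, then argue that the remaining freedom in the section $n(w)$ is absorbed by equivalence. The first two steps are fine. The gap is in your third step, where you handle the ambiguity in the choice of characters $\eta_\beta$.

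You claim that two admissible choices of the $\eta_\beta$ differ by a coboundary in $Z^1(W_F, Z(\hat{H})^d)$, and then invoke Lemma~\ref{lemma4.4} to conclude the sections are $\hat{G}$-conjugate. This is not correct. Recall that, via Shapiro, the cohomology class of $\zeta^\beta = (\zeta_i^\beta)_i$ in $H^1(W_F,(\C^\times)^d)$ \emph{is} the character $\eta_\beta$; two distinct choices of $\eta_\beta$ therefore give cocycles in distinct cohomology classes, and their ratio is not a coboundary. Lemma~\ref{lemma4.4} only compares a section with its modification by a coboundary (the specific coboundary constructed to enforce condition~(\ref{14})), so it does not cover the variation in $\eta_\beta$.

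The paper's argument is different and uses the constraint~(\ref{19}) in an essential way. If $(\eta_\alpha)$ and $(\eta'_\alpha)$ are two admissible families, then $\nu_\alpha := \eta'_\alpha/\eta_\alpha$ satisfies $\nu_{\alpha+1}\circ\sigma = \nu_\alpha$. A short computation with the relation $\delta_{i+1}(w)=\sigma\,\delta_i(w)\,\sigma^{-1}$ then shows that the correction factor $\nu_{i+\alpha}(\delta_i(w))$ is \emph{independent of $i$}; in other words it lies in $\mathrm{diag}(Z(\hat{H}))\subset \hat{H}$. Since the fibre of $\cG'$ over $w$ is $\hat{H}\cdot n(w)$, the two choices yield literally the \emph{same} subgroup $\cG'\subset{}^LG$, not merely conjugate ones. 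This is the step you are missing: the constraint~(\ref{19}) is what collapses the apparent $d$-parameter family of sections down to a single subgroup, and no conjugation (hence no appeal to Lemma~\ref{lemma4.4}) is needed at this point.
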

For the proof we can use our previous construction of a section. It
follows from our analysis that we must take $s_0=(1, \zeta, \cdots,
\zeta^{d-1})$ (in fact block matrices of size $b$) up to  a
scalar. For two choices we therefore have
$g\tilde{s}_1g^{-1}=z\tilde{s}_2$ (take $g\in GL(n)^d$ diagonal). We
can therefore assume that $s_0$ is fixed. Then $\hat{H}\subset
\hat{G}$ is well defined. 

If $\cG'\subset \hat{G}\rtimes W_F$ is an endoscopic subgroup, its
fiber $\cG'_w$ over $w\in W_F$ is equal to
$\hat{G}_{\tilde{s}}\bs(w)$ for any section $\bs$. In particular it is
equal to $\hat{H}\bs(w)$ in our case, with of course $\hat{H}\subset
\hat{G}$ given by the diagonal embedding. Assume $\bs_1$ and $\bs_1'$
are two sections as before. We obtain $\xi_1, \xi_1'$ and
$\xi_2=z\xi_1z^{-1}, ~\xi_2'=z\xi_1'z^{-1}$. The choice of $z$ does
not depend on the choice of the characters $\eta^{\beta}$ in the
construction of $\bs_1, ~\bs_1'$. Then, since $z$ commutes with
$\mbox{diag}(\hat{H})$, we see that the two corresponding subgroups
(given by $\xi_2, ~\xi_2'$) will be the same if $z^1(w)\in
\mbox{diag}(\hat{H})z^2(w)$ for $w\in W$. 

We have seen that $z^1(w)$ could be obtained from the characters
$\eta_{\alpha}$, with $z_{i, \alpha}(w)=\eta_{i+\alpha}(\delta_i)$ and
$\eta_{\alpha+1}\circ \sigma=\omega \eta_{\alpha}$. If
$(\eta'_{\alpha})$ are another choice,
$\eta'_{\alpha}=\eta_{\alpha}\nu_{\alpha}$ with $\nu_{\alpha+1}\circ
\sigma=\nu_{\alpha}$. This implies that
$\nu_{\alpha+i+1}(\delta_{i+1})= \nu_{\alpha+i}(\delta_i)$, and the
correction belongs to $\mbox{diag}(Z(\hat{H}))$. This concludes the
proof \footnote{Waldspurger's proof, terser, was more elegant.}. 

\section{ proof of theorem 1}
\subsection{}
We now have to understand the effect of our homomorphism $\xi_1$ as in
Equation (\ref{xi}) of $L$-groups on the data pertinent to the
stabilization, i.e., on the data composed of Hecke matrices for almost
all primes. Note that 
\[^L\!H=GL(b,\C)^d\rtimes W_F\]
is not a direct product, so a ''Hecke matrix'' at a prime $v$ is in fact a
conjugacy class in $\hat{H}\times \mbox{Frob}_v$ under the conjugation
action of  $\hat{H}$. 

For this we must first consider a simple case. Assume that $\omega=1$,
so we are in the case of non-twisted base change, i.e., characterizing
the representations $\Pi_n$ of $GL(n, \A_E)$ such that
$^{\sigma}\!\Pi_n\simeq \Pi_n$. 

Recall (\cite{AC}, \cite{C}) the two natural operations associated
to (cyclic) base change. The first is automorphic restriction, denoted earlier by $BC_F^E$, sending
representations of $GL(n, \A_F)$ to representations of $GL(n,
\A_E)$. It is associated to the diagonal embedding
\[ ^L\!G_0\to ^L\!G\]
where $^L\!G_0$ is the $L$-group of $GL(n)/F$, $^L\!G$ the $L$-group
of $\mbox{Res}_{E/F}(GL(n)/E)$, so $\hat{G}=GL(n, \C)^d$: 
\[(g,w)\mapsto (\mbox{diag}(g), w)\quad (w\in W_F).\]
Suppose $n=db$. The second operation is automorphic induction, denoted by $AI_E^F$, sending
representations of $GL(b,\A_E)$ to those of $GL(n,\A_F)$. The
associated embedding of $L$-groups is given by
\[ (g_1, \cdots, g_d)\mapsto g_1\oplus\cdots\oplus g_d\quad (g_i\in
GL(b,\C))\]
\[\mbox{and}\quad (1,w)\mapsto (\tau(w), w)\]
where $\tau(w)\in \fS_d$ (realized as before by block-scalar matrices
in $GL(n, \C)$), and 
\[ \tau(\oplus g_i)\tau^{-1}=(g_{i+k}),\]
where $k=i(w)$, so $\tau^{-1}i=i+k$.

We simply write $Res$ and $Ind$ for these two operations, the fields being here $F$ and $E$. This corresponds to our constructions in (\cite{AC}), taking
$\omega=1$. 
The corresponding operations are described in \cite[Chapter
3]{AC}, see also \cite {C}. They are well-defined for representations that are 'induced
from cuspidal' (\cite[Sections 3.1, 3.6]{AC}), i.e., induced from unitary cuspidal representations.  

Composing these two operations, we get a homomorphism of $L$-groups 
$\mu_1: GL(b)^d\rtimes W_F\to GL(n)^d\rtimes W_F$, given by, 
\begin{equation}\label{II.1}
\begin{split}
(g_1, \cdots, g_d)& \mapsto \mbox{diag}(g_1\oplus\cdots\oplus g_d) \\
(1,w) &\mapsto (\mbox{diag}~\tau(w), w).
\end{split}
\end{equation}

Recall also that for $\pi_i ~(i=1,\cdots, r)$ representations of
$GL(n_i, \A_E)$, there is an associated representation $\boxplus
\pi_i$ of $GL(n,\A_E)$ obtained by parabolic induction ($n=\sum
n_i$). We recall the following well known result:
\begin{proposition}\label{prop:resind}
For $\pi_b$ a representation of $GL(b,\A_E)$ induced from cuspidal, 
\[ \mbox{Res}\circ \mbox{Ind} (\pi_b)=\pi_b\boxplus
\sigma\pi_b\boxplus\cdots\boxplus \sigma^{d-1}\pi_b.\]
\end{proposition}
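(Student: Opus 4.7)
The identity will be established by matching local data at almost every prime of $E$ and then invoking strong multiplicity one for representations induced from cuspidal. Since $\mathrm{Res}$ and $\mathrm{Ind}$ are defined in \cite{AC} through their effect on Hecke matrices via the $L$-group homomorphisms of (\ref{II.1}), it suffices to verify the assertion at the level of Satake parameters at the unramified places.

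\textbf{The computation.} Let $\phi_b : W_E \to GL(b,\C)$ denote the Langlands parameter (or, for almost all $v$, the semisimple conjugacy class $A_w \in GL(b,\C)$ attached to $\pi_{b,w}$) of $\pi_b$, viewed as a representation of $GL(b,\A_E) = H(\A_F)$ for $H = \mathrm{Res}_{E/F} GL(b)$. By the description of $\mathrm{Ind} = AI_E^F$ in \cite[Chapter 3]{AC}, the parameter of $AI_E^F(\pi_b)$ on $GL(n,\A_F)$ is the induced representation $\mathrm{Ind}_{W_E}^{W_F} \phi_b$; by the description of $\mathrm{Res} = BC_F^E$, the parameter of $BC_F^E$ applied to a representation of $GL(n,\A_F)$ is the restriction of its $W_F$-parameter to $W_E$. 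Composing, the parameter of $\mathrm{Res} \circ \mathrm{Ind}(\pi_b)$ is
\[
\mathrm{Res}_{W_E}^{W_F} \mathrm{Ind}_{W_E}^{W_F} \phi_b.
\]
Since $W_E$ is normal in $W_F$ with cyclic quotient $\Sigma = \langle \sigma \rangle$ of order $d$, Mackey's formula collapses to a sum over $\Sigma$, and every double coset contributes a single copy of a Galois conjugate:
\[
\mathrm{Res}_{W_E}^{W_F} \mathrm{Ind}_{W_E}^{W_F} \phi_b \;\simeq\; \bigoplus_{i=0}^{d-1} \phi_b^{\sigma^i}.
\]

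\textbf{Matching and conclusion.} The right-hand side of the Proposition is the isobaric sum $\pi_b \boxplus \sigma\pi_b \boxplus \cdots \boxplus \sigma^{d-1}\pi_b$, whose parameter on $GL(n,\A_E)$ is precisely $\phi_b \oplus \sigma\phi_b \oplus \cdots \oplus \sigma^{d-1}\phi_b$; here we use that parabolic induction corresponds on the dual side to direct sum of parameters, and that the Galois twist $\sigma^i \pi_b$ has parameter $\phi_b^{\sigma^i}$, i.e., at a place $w$ of $E$ its Satake class is $A_{\sigma^{-i}w}$. Thus the two sides of the Proposition have the same Satake parameter at every place $w$ of $E$ where $\pi_b$ is unramified, and since both are isobaric sums of unitary cuspidal representations, strong multiplicity one (\cite[Sections 3.1, 3.6]{AC}) forces them to coincide globally.

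\textbf{Main obstacle.} The one point requiring care is the bookkeeping in the Mackey decomposition: for a prime $v$ of $F$ that splits in $E$ as $v = w_1 \cdots w_r$ with common residue degree $f = d/r$, one must verify that the $r$ summands of $\mathrm{Res}_{W_{E,v}} \mathrm{Ind}_{W_E}^{W_F} \phi_b$ (one per place above $v$) reassemble correctly into the $d$ Galois conjugates $\{\sigma^i \pi_b\}_{0 \le i < d}$ at the places $w_1, \ldots, w_r$, taking into account that $\sigma^r$ generates the decomposition group at each $w_j$. This is the content of the explicit formulas for $\mu_1^{\mathrm{Ind}}$ and $\mu_1^{\mathrm{Res}}$ read off from (\ref{II.1}), with $\tau(w) \in \fS_d$ acting by the $r$-cycle structure dictated by $\iota(\mathrm{Frob}_v)$; once this matching is done prime by prime the result is immediate.
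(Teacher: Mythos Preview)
Your proof is correct and follows essentially the same approach as the paper: match Satake data at almost all primes and invoke strong multiplicity one for induced-from-cuspidal representations. The only cosmetic difference is that the paper carries out the unramified computation by writing the Hecke matrix of $\mathrm{Ind}(\pi_b)$ at $v$ explicitly as $\bigoplus_{w'\mid v}(t_{w'}^{1/f}\oplus\zeta t_{w'}^{1/f}\oplus\cdots\oplus\zeta^{f-1}t_{w'}^{1/f})$ and then raising to the $f$-th power for $\mathrm{Res}$, whereas you package the same calculation as a local Mackey decomposition---your ``main obstacle'' paragraph is precisely this explicit step.
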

Consider finite primes $w~|~v$ of $E$ over $F$ where all data are
unramified. If $t_{w'}$ is the Hecke matrix of $\pi_b$ at such a prime
$w'$, the matrix $T_v$ of $\mbox{Ind}(\pi_b)$ at $v$ is 
\[ T_v=\bigoplus_{w'~|~v}(t_{w'}^{1/f}\oplus  \zeta t_{w'}^{1/f}
\oplus \cdots\oplus \zeta^{f-1}t_{w'}^{1/f}),\]
where $\zeta$ is a primitive root of unity of order $f=[E_w:F_v]$. 
The Hecke matrix $T_w$ of $\mbox{Res}(\Pi)$ for a representation $\Pi$
of $GL(n,\A_E)$ is $T_v^f$. Thus the Hecke matrix of $\mbox{Res}\circ
\mbox{Ind} (\pi_b)$ at a prime $w$ is 
$\bigoplus_{w'~|~v}(t_{w'}\oplus   \cdots\oplus t_{w'})$, equal to
$\bigoplus_{\sigma\in Gal(E/F)}t_{\sigma w}$, the Hecke matrix of
the right-hand side. Since the representations on the two sides of the
equality are induced from cuspidal, they are equal. 

Now there exists an obvious homomorphism of $L$-groups realising the
operation
\[ \pi_b\mapsto \pi_b\boxplus
\sigma\pi_b\boxplus\cdots\boxplus \sigma^{d-1}\pi_b.\]
First $(\pi_1, \cdots, \pi_d)\mapsto \pi_1\boxplus\cdots\boxplus \pi_d$
is given by $ GL(b)^{d^2}\to GL(n)^d$,
\begin{equation}\label{II.2}
(g_{ki})\mapsto \left(\bigoplus_kg_{ki}\right)_i.
\end{equation}
It is obviously compatible with the operation of the Weil group. On
the other hand $\pi\mapsto ^{\sigma}\!\pi$ is given by 
\[(g_i)\mapsto (g_{i+1}).\]
So the composite operation is given by 
\[ \begin{split}
GL(b)^d &\to GL(n)^d\\
(g_i)&\mapsto  \left(\bigoplus_k g_{i+k}\right)_i.
\end{split}\]
It is equivariant for the action of $W_F$ acting (via the restriction
of scalars) on both sides. Thus we get
\begin{equation}\label{II.3}
\begin{split}
\mu_0 :~ GL(b)^d\rtimes W_F& \to GL(n)^d\rtimes W_F\\ 
 (g_i, w)&\mapsto \left(\left(\bigoplus_k g_{i+k}\right)_i, w\right).
\end{split}
\end{equation}
Since the two homomorphisms of $L$-groups $\mu_0$ and $\mu_1$ have the
same effect on representations, they should be conjugate by an element in
$\hat{G}=GL(n)^d$. We proceed to exhibit this conjugation. We first
consider the connected dual groups. We seek $P=(P_i)\in GL(n)^d$ such
that 
\[ P\mu_0(g)P^{-1}=\mu_1(g), \quad g=(g_i)\in GL(b)^d.\]
Thus, 
\[ P_i(\bigoplus_k g_{i+k})P_i^{-1}=g_1\oplus\cdots\oplus g_d,\]
for each $i$. If $Q$ is (a block-scalar matrix) associated to a
permutation $\tau\in \fS_d$, 
\[ Q(\bigoplus_k g_{k})Q^{-1}=(g_{\tau^{-1}(k)}).\]
Thus $P_i$ must be the permutation matrix associated to $\tau$, where
$\tau(k)=i+k$. 

To avoid confusion, we now replace our indices $k$ by $\alpha$ (in
conformity with the previous section) and write $k=\iota(w), ~(w\in
W_F)$. The conjugation of the homomorphisms on $W_F$ gives, 
\[(P,1)(1,w)(P,1)^{-1}=(\mbox{diag}\tau(w), w)\quad (w\in W_F),\]
\begin{equation}\label{II.4}
\mbox{so}\quad P_iP_{i+k}^{-1}=\tau(w). 
\end{equation}
The left hand side is associated to $\tau(\alpha)=\alpha-k$. Thus
(with the previous choices) we must take
\[ \tau(w)=\tau_1^{\iota(w)}, \quad \tau_1(\alpha)=\alpha-1.\]
(compare with the formula for $\tau$ preceding (\ref{II.1})). 

We have therefore proved:
\begin{lemma}\label{lemma:II.1}
With the above notation, 
\[  P\mu_0P^{-1}=\mu_1,\]
where $P=(P_i)\in GL(n,\C)^d$ and $P_i$ is the permutation matrix
associated to $\alpha\mapsto i+\alpha$. 
\end{lemma}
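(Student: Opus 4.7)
The plan is to verify $P \mu_0 P^{-1} = \mu_1$ separately on the connected dual group $\hat{H} = GL(b,\C)^d$ and on the Weil group factor $W_F$. Since both sides are homomorphisms and any element of the semidirect product is a product of elements of these two types, this will suffice.

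For the connected part, fix $g = (g_1, \ldots, g_d) \in GL(b)^d$ and work componentwise in $\hat{G} = GL(n,\C)^d$. The $i$-th component of $\mu_0(g)$ is $\bigoplus_{k=1}^d g_{i+k}$, whereas the $i$-th component of $\mu_1(g) = \mbox{diag}(g_1 \oplus \cdots \oplus g_d)$ is $g_1 \oplus \cdots \oplus g_d$, independent of $i$. Using the block-permutation formula $Q(\bigoplus_k h_k) Q^{-1} = \bigoplus_k h_{\tau^{-1}(k)}$ recalled in the paragraph preceding the definition of $\mu_0$, and taking $P_i$ to be associated to $\tau_i : \alpha \mapsto \alpha + i$ (so $\tau_i^{-1}(k) = k - i$), I would directly compute
\[ P_i \Bigl(\bigoplus_{k} g_{i+k}\Bigr) P_i^{-1} \;=\; \bigoplus_k g_{\,i + \tau_i^{-1}(k)} \;=\; \bigoplus_k g_k,\]
which is exactly the $i$-th component of $\mu_1(g)$.

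For the Weil group part, I would compute in $\hat{G} \rtimes W_F$:
\[ (P,1)(1,w)(P,1)^{-1} \;=\; (P \cdot w(P^{-1}),\, w).\]
By Equation (\ref{sigmaonhatG}), $w$ acts on $\hat G$ by shifting indices by $k=\iota(w)$, so $w(P^{-1}) = (P_{i+k}^{-1})_i$, and thus the first coordinate becomes $(P_i P_{i+k}^{-1})_i$. For this to equal $\mbox{diag}\,\tau(w) = (\tau(w), \ldots, \tau(w))$, it suffices to show that $P_i P_{i+k}^{-1} = \tau(w)$ independently of $i$ — exactly the relation (\ref{II.4}) in the paper. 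Composing the associated permutations $\alpha \mapsto \alpha + i$ and $\alpha \mapsto \alpha - i - k$ yields $\alpha \mapsto \alpha - k$, which is manifestly independent of $i$ and equals $\tau_1^k$ with $\tau_1(\alpha) = \alpha - 1$. Since the paper sets $\tau(w) = \tau_1^{\iota(w)}$ just before (\ref{II.1}), the identity holds.

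The main obstacle — and essentially the only real content — is keeping the permutation conventions aligned: the block-permutation formula $Q(\bigoplus_k g_k)Q^{-1} = (g_{\tau^{-1}(k)})$, the choice of $P_i$ as the permutation matrix attached to $\alpha \mapsto \alpha + i$, and the convention $\tau_1(\alpha) = \alpha - 1$ from the construction of the endoscopic section. Once these conventions are set consistently, the lemma reduces to the two one-line verifications above, and conjugacy of the two $L$-homomorphisms in $\hat G = GL(n,\C)^d$ follows.
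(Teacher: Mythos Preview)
Your proof is correct and follows essentially the same approach as the paper: the paper likewise verifies the conjugation separately on $\hat H$ (finding that $P_i$ must be the block-permutation matrix attached to $\alpha\mapsto i+\alpha$ via the formula $Q(\bigoplus_k g_k)Q^{-1}=(g_{\tau^{-1}(k)})$) and on $W_F$ (deriving the relation $P_iP_{i+k}^{-1}=\tau(w)$ and identifying $\tau(w)=\tau_1^{\iota(w)}$ with $\tau_1(\alpha)=\alpha-1$). The only cosmetic difference is that the paper phrases the argument as \emph{discovering} $P$ from the two constraints, whereas you phrase it as \emph{verifying} the identity for the given $P$.
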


\subsection{}
Now return to the homomorphism $\mu_0$ (\ref{II.1}) realising the
operation
\[ \pi\mapsto \boxplus_{\alpha=1}^d\sigma^{\alpha}\pi.\]
($\pi$ being a representation of $GL(d,\A_E)$). Let $\eta_1, \cdots,
\eta_d$ be characters of $C_E$, associated to the parameters
$\eta_{\alpha}(\delta_i(w))\in (\C^\times)^d$ as discussed before equation
(\ref{18}). Now the homomorphisms (\ref{II.2}) can be multiplied by
the homomorphisms associated with the $\eta_{\alpha}$, so we see that 
\[ (g_{\alpha i})\mapsto \left(\bigoplus_{\alpha}g_{\alpha
    i}\eta_{\alpha}(\delta_i(w))\right)_i\]
corresponds to $(\pi_{\alpha})\mapsto \boxplus_{\alpha} \pi_{\alpha}\otimes
\eta_{\alpha}.$
In particular, $\pi\mapsto \boxplus_{\alpha}\sigma^{\alpha} \pi_{\alpha}\otimes
\eta_{\alpha}$, is then given by 
\[
\begin{split}
\xi_{0}: GL(b)^d\rtimes W_F& \to GL(n)^d\rtimes W_F \\
(g_i,w)&\mapsto [(g_{\alpha+i}\eta_{\alpha}(\delta_i(w))_i, w].
\end{split}
\]
Conjugating by $P$, we obtain a homomorphism $\xi_1$. On $GL(b)^d$, it
coincides with $\eta_1$. For $w\in W_F$ we must compute
\[(P,1)[\left(\bigoplus_{\alpha}\eta_{\alpha}(\delta_i(w))\right)_i, w](P^{-1}, 1).\]
The $i$-th component is
\[P_i\left(\bigoplus_{\alpha}\eta_{\alpha}(\delta_i(w))\right)P^{-1}_{i+k}
= P_i\left(\bigoplus_{\alpha}\eta_{\alpha}(\delta_i(w))\right)P^{-1}_{i}\tau(w)\]
by (\ref{II.4}). The conjugation by $P_i$ is the permutation $\alpha
\mapsto i+\alpha$, so this is 
\[ \left(\bigoplus_{\alpha}\eta_{i+\alpha}(\delta_i(w))\right)\tau(w).\]
In conclusion,
\begin{lemma}\label{lemma:II.2}
The map $\pi\mapsto \boxplus_{\alpha}\sigma^{\alpha} \pi_{\alpha}\otimes
\eta_{\alpha}$ is realized by the homomorphism, 
\[
\begin{split}
\xi_{1}: GL(b)^d\rtimes W_F& \to GL(b)^d\rtimes W_F\\
\xi_{1}(g_1,\cdots, g_d)&=\mbox{diag}(g_1\oplus\cdots \oplus g_d)\\
\xi_1(w)&=[\left(\left(\bigoplus_{\alpha}\eta_{i+\alpha}(\delta_i(w))\right)\tau(w)\right)_i,
w].
\end{split}
\]
\end{lemma}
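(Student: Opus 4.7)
The plan is to verify the formula by direct computation from $\xi_1 = (P,1)\cdot\xi_0\cdot(P^{-1},1)$, where $P=(P_i)$ is the element from Lemma \ref{lemma:II.1}. The argument factors cleanly into a computation on the connected component $\hat{H}=GL(b)^d$ and one on the Weil-group piece.

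For the connected component, I would expand $\xi_0(g_1,\dots,g_d,1) = \bigl((\bigoplus_\alpha g_{\alpha+i})_i,\,1\bigr)$ and conjugate the $i$-th factor by $P_i$. Since $P_i$ is the permutation matrix realizing $\alpha\mapsto i+\alpha$ on the $d$ blocks, its conjugation action on the block-diagonal matrix $\bigoplus_\alpha g_{\alpha+i}$ sends the entry labelled by $\alpha+i$ to position $\alpha$, yielding $g_1\oplus\cdots\oplus g_d$, independently of $i$. This produces $\xi_1(g_1,\dots,g_d) = \mbox{diag}(g_1\oplus\cdots\oplus g_d)$, as claimed.

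For the Weil-group part, let $w\in W_F$ with $k=\iota(w)$. The element $\xi_0(1,w)$ has $\hat{G}$-valued cocycle component whose $i$-th factor is $\bigoplus_\alpha\eta_\alpha(\delta_i(w))$. Conjugation by $(P,1)$ on $(\cdot,w)$ must absorb the action of $w$ on $\hat{G}=GL(n)^d$, which shifts indices by $k$; hence the $i$-th factor of the conjugate is $P_i\bigl(\bigoplus_\alpha\eta_\alpha(\delta_i(w))\bigr)P_{i+k}^{-1}$. Using Equation (\ref{II.4}) in the form $P_{i+k}^{-1}=P_i^{-1}\tau(w)$, this becomes $\bigl(P_i(\bigoplus_\alpha\eta_\alpha(\delta_i(w)))P_i^{-1}\bigr)\tau(w)$, and since conjugation by $P_i$ acts as the permutation $\alpha\mapsto i+\alpha$ on the blocks, this equals $\bigl(\bigoplus_\alpha\eta_{i+\alpha}(\delta_i(w))\bigr)\tau(w)$, matching the asserted formula.

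There is no substantive obstacle; the work is formal bookkeeping, provided one is consistent with the semidirect-product conventions for $^LG$ and tracks the Weil-group action on $\hat{G}$ carefully. The delicate point, already isolated in Lemma \ref{lemma:II.1}, is the relation $P_iP_{i+k}^{-1}=\tau(w)$, which is exactly what forces the permutation $\tau(w)=\tau_1^{\iota(w)}$ to appear and converts the shift $\alpha\mapsto \alpha+i$ coming from $P_i$-conjugation into the index shift $\eta_\alpha\mapsto\eta_{i+\alpha}$ in the final formula.
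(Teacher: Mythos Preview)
Your proof is correct and follows essentially the same approach as the paper: you define $\xi_1$ by conjugating $\xi_0$ by $(P,1)$, treat the connected component and the Weil-group piece separately, and for the latter use the relation $P_iP_{i+k}^{-1}=\tau(w)$ from Lemma~\ref{lemma:II.1} together with the fact that conjugation by $P_i$ implements the permutation $\alpha\mapsto i+\alpha$. Your account of the connected-component step is in fact slightly more explicit than the paper's, which simply remarks that $\xi_1$ coincides with $\mu_1$ on $GL(b)^d$.
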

We note that this is the homomorphism $\xi_1$ obtained from endoscopy
as in Equations (\ref{25}) and (\ref{18}). 

\subsection{}
We can now complete the proof of Theorem 1. Assume first $d$ does not
divide $n$. We then have 
\[T_{disc}(\phi \times \theta; \omega^{-1})=0.\]
The cuspidal representation occuring in the discrete trace have
multiplicity one, and their families of Hecke eigenvalues (away from a
finite set $S$ of primes) are linearly independent, and independent
from those of other representations. Of course only the cuspidal
representations such that $^{\sigma}\!\Pi\simeq \Pi\otimes \omega$
contribute. We conclude that there are no such representations, as was of course clear from the consideration of the central characters (see the remark after Theorem 1.)

Consider now the case when $d$ divides $n$. There is only, up to
equivalence, one endoscopic datum $H$; if $\pi_b$ is a cuspidal
representation of $GL(b, \A_E)$, the associated map on Hecke matrices
sends (up to conjugation) $t_w(\pi_b)$ to 
\[ t_w(\pi_b\otimes \eta_1)\oplus 
t_w(\sigma\pi_b\otimes \eta_2)\oplus\cdots \oplus t_w(\sigma^{d-1}
\pi_b\otimes \eta_d)\]
at primes where all data are unramified, as follows from the
conjugation of $\xi_1$ and $\mu_0$. This then remains true if $\pi_b$
is any automorphic representation of $GL(b,\A_E)$, in particular for
those appearing in the discrete trace formula for $H$. Here again, the sum
\[\sum_{^{\sigma}\!\Pi\simeq \Pi\otimes
  \omega}\mbox{trace}(I_{\theta}(\Pi\otimes \omega^{-1})(\phi))\]
over the cuspidal representations yields, evaluated against a function
$\phi$, a linear combination of characters of the Hecke algebra
linearly independent from all others, associated to induced
representations. The same argument then proves the Theorem.

\end{document}